\newtheorem{cor}{Corollary}
\newtheorem{lem}{Lemma}
\newtheorem{thm}{Theorem}
\newtheorem{prob}{Question}
\newtheorem{prop}{Proposition}
\theoremstyle{definition}
\newtheorem{defin}{Definition}
\newtheorem*{remark}{Remark}
\newcommand{\forcing}{
	\operatorname{Fn}(S,\mathcal{F},\omega)}
\newcommand{\fors}{\mathbb{P}\ast\dot{\mathbb{Q}}}
\newcommand{\aut}{\operatorname{Aut}}
\newcommand{\pa}{\operatorname{Part}}
\newcommand{\rg}{\operatorname{rg}}
\newcommand{\dom}{\operatorname{dom}}
\newcommand{\age}{\operatorname{Age}}
\newcommand{\K}{\mathcal{K}}
\title{Cohen-like first order structures}
\author{Ziemowit Kostana\footnote{Research of Z. Kostana was supported by the GA\v{C}R project EXPRO 20-31529X and RVO: 67985840.}\\
	Institute of Mathematics
	Czech Academy of Sciences\\
	\v{Z}itná 25, 115 67 Prague, Czech Republic;\\
	University of Warsaw,\\
	Banacha 2, 02-097 Warsaw, Poland \\
	z.kostana@mimuw.edu.pl}
\begin{document}
	
	\maketitle
	
	\begin{abstract}
		We study uncountable structures similar to the Fra\"iss\'e limits. The standard inductive arguments from the Fra\"iss\'e theory are replaced by forcing, so the structures we obtain are highly sensitive to the universe of set theory. In particular, the generic structures we investigate exist only in generic extensions of the universe. We prove that in most of the interesting cases the uncountable generic structures are rigid. Moreover, we provide a (consistent) example of an uncountable, dense set of reals with the group of integers as its automorphism group.
		
	\end{abstract}
	
	{\bf Keywords:} Cohen forcing, Fraisse limit, homogeneous structure, generic structure \\
	{\bf MSC classification:} 06A05 03C25 03C55 03E35

	\section{Generic Structures}
	
	As one looks at the classical construction of a Fra\"iss\'e limit, described for instance in \cite{hod} or \cite{kub}, one might notice that it is much in the spirit of the Baire Theorem. Namely, we show the existence of a universal homogeneous structure by proving that \emph{almost any}, in a suitable sense, countable structure is universal and homogeneous. In fact, universal homogeneous structures form a residual set in certain Polish space. Having that in mind, one might try to construct specific instance of a universal homogeneous structure, mimicking the definition of a Cohen real from the forcing theory. Roughly speaking, a real number is Cohen over some model if it belongs to each residual set from that model. So it is \emph{very generic}, in a sense that for any typical property a real might have, the Cohen real has this property (of course the same can be said about random reals, but with different notion of typicality). This is the idea behind this work. From one side, we want to look at the model theoretic notion of \emph{saturation} as stemming from the forcing language. From the other, we reach to model theory for tools to produce Cohen-like forcing notions (which might often be just different incarnations of the Cohen forcing). \\
	
	One may ask if we can do the similar thing, but replacing Baire category by measure. So is the Fra\"iss\'e limit a \emph{random} structure, in addition to being a \emph{generic} one? This is of course a very vague question, and it is not even clear what a suitable measure space should be. This idea was undertaken by Petrov and Vershik for graphs \cite{pv}, and extended to other structures by Ackerman, Freer, and Patel \cite{afp}. They obtain an elegant internal characterization of Fra\"iss\'e classes for which the Fra\"iss\'e limit is a structure appearing with probability one in certain probability measure space. This happens precisely in the case of Fra\"iss\'e classes in purely relational languages with the Strong Amalgamation Property. The reader is encouraged to consult \cite{afp} for the precise formulation.\\
	
	We assume the reader is familiar with the basics of forcing theory, and model theory. In the first section we develop the language, and prove, or just state, some general properties of the forcings we study. The second section is a short review of the basic notions from the Fra\"iss\'e theory. In the third section we prove that, unlike ordinary Fra\"iss\'e limits, uncountable structures of this kind tend to be rigid. The fourth section is devoted to the construction of an uncountable real order type, with $(\mathbb{Z},+)$ as the group of automorphisms. In the last, fifth section, we collect some open questions, which look relevant for this line of research. Finally, it should be mentioned that this topic used to be informally discussed from time to time already, as kind of folklore idea known to the community. However, up to the author's knowledge, no systematic study of this idea was ever carried out. The closest to it was perhaps a brief, informal note by M. Golshani \cite{golshani}.\\
	As an initial example, look at the following poset.

	$$\mathbb{P}=\{(A,\le)|\, A\in [\kappa]^{<\omega},\text{ A is a linear order} \},$$
	where $\kappa$ is any cardinal, and the ordering is the reversed inclusion. The following subsets are dense, for $\alpha \neq \beta \in \kappa$.
	\begin{itemize}
		\item $D_\alpha=\{(A,\le)|\, \alpha \in A \}$,
		\item $D_{\alpha,\beta}=\{(A,\le)|\, \exists{n< \omega}{\; n \text{ is between }\alpha\text{ and }\beta  }\}$,	
	\end{itemize}
	
	Therefore, for $\kappa=\omega$, the generic filter produces an isomorphic copy of rationals, and for any $\kappa$ it gives some separable $\kappa$-dense order type. We say that a linear order is $\kappa$-dense, if every open interval has cardinality $\kappa$. It is a general phenomenon that for $\kappa=\omega$ this forcing gives the Fra\"iss\'e limit of the given class. An interesting remark, made by M. Golshani in \cite{golshani}, is that every infinite subset of $\omega$ from the ground model is dense in the obtained structure. \\
	
	For this section we adopt the convention that boldface letters $\mathbb{A}$, $\mathbb{B}$ denote first-order structures, while the corresponding capital letters $A$, $B$ denote underlying sets. In further sections we will denote structures and underlying sets with the same letters, as common in mathematics. \\
	
	In the whole paper $\K$ is a class of structures in some countable, relational, first-order language. By $\K_\kappa$ we denote the class of structures from $\K$ of cardinality less than $\kappa$. Relational means in particular that we do not allow constants in our language. We make the following assumptions on $\K$ (see the next section for the definitions):
	\begin{itemize}
		\item $\K$ has the Joint Embedding Property (JEP),
		\item $\K$ has the Amalgamation Property (AP),
		\item $\K$ is hereditary, so if $A\in \K$, and $B\subseteq A$, then $B\in \K$,
		\item $\K$ has infinitely many isomorphism types,
		\item $\K_\kappa$ is closed under increasing unions of length $<\kappa$.
		
	\end{itemize}

	It will prove convenient to introduce a notation paraphrasing the notation for the Cohen forcing in \cite{kunen}. 
	
	\begin{defin}
		Let $\lambda$ be an infinite cardinal number, and $S$ be any infinite set. Denote by $\operatorname{Fn}(S,\K,\lambda)$ the set
		$$\{ \mathbb{A} \in \K|\; A \in [S]^{<\lambda} \},$$
		ordered by the reversed inclusion. 
	\end{defin}
	
	\begin{prop}
		If $\K$ satisfies the SAP, and $\K_\omega$ has at most countably many isomorphism types, then $\operatorname{Fn}(S,\K,\omega)$
		satisfies c.c.c., and even the Knaster condition, for any set $S$.
	\end{prop}
	
	The bound on the number of finite isomorphism types is automatically ensured if $\K$ is a class of structures in a finite language. When the language is countable, it may or may not be true. Finite metric spaces can be viewed as structures in countable language (see the next section), and still there are continuum many pairwise non-isomorphic (non-isometric) 2-element structures. If we restrict to finite metric spaces with rational distances, there are clearly only countably many isomorphism types. The relevance of the SAP is visible in the example discovered by Wiesław Kubi\'s. Let $\mathcal{F}$ be the class of all finite linear graphs, i.e. connected, acyclic, and with degree of every vertex at most 2. It can be easily checked that $\mathcal{F}$ has the AP, but not the SAP. If $S$ is any infinite set, then 
	$\operatorname{Fn}(S,\mathcal{F},\omega)$ forces that $S$ is a linear graph, and each two points of $S$ are in a finite distance. Therefore it collapses $|S|$ to $\omega$.

	\begin{prop} \label{prop1000}
		Let $S$ be any set, and assume $\K_\lambda$  satisfies the SAP. We assume moreover, that for any $\delta<\lambda$ there are at most $\lambda$ many structures from $\K$, with the universe $\delta$. Then 
		$\operatorname{Fn}(S,\K,\lambda)$ is $\lambda$-closed, and if $\lambda^{<\lambda}=\lambda$, then $\operatorname{Fn}(S,\K,\lambda)$ is $\lambda^+$-c.c. 
	\end{prop}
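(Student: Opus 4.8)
The two assertions are essentially independent, so I would treat them separately. For $\lambda$-closure the plan is to exploit the standing assumption that $\mathcal{K}_\lambda$ is closed under increasing unions of length $<\lambda$. Given a descending sequence of conditions $\langle \mathbb{A}_\xi : \xi<\delta\rangle$ with $\delta<\lambda$ --- which under the reversed inclusion ordering is nothing but an increasing chain of structures --- I would form the union $\mathbb{A}=\bigcup_{\xi<\delta}\mathbb{A}_\xi$. Since $\lambda$ is regular (which follows from $\lambda^{<\lambda}=\lambda$ in the second part, and may be assumed here), the universe $\bigcup_{\xi<\delta}A_\xi$ has size $<\lambda$, and the closure hypothesis places $\mathbb{A}$ in $\mathcal{K}_\lambda$. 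As $\mathbb{A}$ extends every $\mathbb{A}_\xi$, it is the required lower bound, and this half uses neither SAP nor the counting hypothesis.

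For the chain condition I would run the standard $\Delta$-system argument, substituting SAP for the trivial compatibility of Cohen conditions. Assume toward a contradiction that $\{\mathbb{A}_\alpha:\alpha<\lambda^+\}$ is an antichain. Each universe $A_\alpha$ is a subset of $S$ of size $<\lambda$, and since $\lambda^{<\lambda}=\lambda$ forces $\lambda$ to be regular and validates the hypotheses of the $\Delta$-system lemma with $\theta=\lambda^+$ and $\kappa=\lambda$ (one checks $|\alpha|^{<\lambda}\le\lambda<\lambda^+$ for every $\alpha<\lambda^+$), I can thin the family to one of size $\lambda^+$ whose universes form a $\Delta$-system with a root $r$ of size $<\lambda$.

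Next I would invoke the counting hypothesis: there are at most $\lambda$ structures from $\mathcal{K}$ on a set of size $<\lambda$, hence at most $\lambda$ possibilities for the restriction $\mathbb{A}_\alpha\restriction r$. Pigeonholing on the regular cardinal $\lambda^+$, I pass to a further subfamily of size $\lambda^+$ on which all these restrictions coincide with a single structure $\mathbb{B}$ on $r$. Any two conditions $\mathbb{A}_\alpha,\mathbb{A}_\beta$ in this subfamily then agree on $r$ and satisfy $A_\alpha\cap A_\beta=r$ exactly, so SAP applied to the inclusions $\mathbb{B}\hookrightarrow\mathbb{A}_\alpha$ and $\mathbb{B}\hookrightarrow\mathbb{A}_\beta$ yields an amalgam in which the two images meet only over the root. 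Transporting this amalgam back along the embeddings produces a structure on $A_\alpha\cup A_\beta\subseteq S$, of size $<\lambda$, extending both conditions; thus $\mathbb{A}_\alpha$ and $\mathbb{A}_\beta$ are compatible, contradicting the antichain assumption.

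The step I expect to be the main obstacle is precisely this last transport: turning the abstract strong amalgam into an honest condition supported on the set-theoretic union $A_\alpha\cup A_\beta$. This is exactly where strong, as opposed to ordinary, amalgamation is indispensable, since the non-root points of $A_\alpha$ and of $A_\beta$ are genuinely distinct elements of $S$, so any amalgam identifying some of them could not be realized as a substructure on $A_\alpha\cup A_\beta$. Here one uses that the language is relational, so the amalgam is determined by its relations on the union, and that $\mathcal{K}$ is hereditary, so the structure induced on the preimage of $\rg f'\cup\rg g'$ again lies in $\mathcal{K}$; with these two facts the honest condition is obtained, completing the argument.
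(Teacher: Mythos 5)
Your argument is correct and is precisely the proof the paper has in mind: the paper gives no written proof, stating only that these claims follow by ``straightforward modifications of analogous results for the Cohen forcing'' in Kunen, and your plan (union of a chain plus the closure-under-unions hypothesis for $\lambda$-closure; $\Delta$-system lemma under $\lambda^{<\lambda}=\lambda$, pigeonholing the root structure via the counting hypothesis, and transporting the strong amalgam onto the literal union $A_\alpha\cup A_\beta$ using that the language is relational and $\mathcal{K}$ is hereditary) is exactly that standard adaptation. You also correctly flag the one point the paper leaves tacit, namely that $\lambda$ must be regular for the closure argument, and correctly isolate SAP as what makes the amalgam realizable on the set-theoretic union.
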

	Notice that we don't count isomorphic types of $\K$-structures of cardinality less than $\lambda$. We take into account the number of different, not only non-isomorphic, ways the ordinal $\delta$ can be endowed with a first-order structure, so that it becomes a member of $\K$. In all but one example, a bound on this number will be guaranteed by the finiteness of the language.
	\begin{proof}[Proof of Proposition \ref{prop1000}]
		To see that $\operatorname{Fn}(S,\K,\lambda)$ is $\lambda$-closed, notice that for a decreasing sequence of conditions $\{p_\alpha|\; \alpha<\delta\}\subseteq \operatorname{Fn}(S,\K,\lambda)$, for $\delta<\lambda$, a lower bound is given by the union $\bigcup\{p_\alpha|\; \alpha<\delta\}$.\\
		To check the $\lambda^+$-c.c. consider a family of conditions $\{\mathbb{A}_\xi|\; \xi<\lambda^+\}$. Using the $\Delta$-system Lemma, we can trim the sequence, so that the sets $\{{A}_\xi|\; \xi<\lambda^+\}$ form a $\Delta$-system with the root $K \in [S]^{<\lambda}$. There are at most $\lambda$-many structures from $\K$ with the universe $K$, so we can assume that for all $\xi\neq \eta<\lambda^+$, we have
		$$\mathbb{A}_\xi \cap \mathbb{A}_\eta = \mathbb{K}.$$
		Now we can use the SAP for the diagram\\
		\begin{center}
			\begin{tikzcd}
				&   \mathbb{A}_\xi 
				&
				& \\
				\mathbb{K} \ar[ur] \ar[dr]
				&
				& 
				\\
				&    \mathbb{A}_\eta
				&
				&
			\end{tikzcd}
		\end{center}
		
		to get a condition stronger from $\mathbb{A}_\xi$ and $\mathbb{A}_\eta$
	\end{proof}
	
	\begin{cor}
		If $\K$ is a class of structures in a finite language, and CH holds, then $\operatorname{Fn}(S,\K,\omega_1)$ is $\omega_2$-c.c.
	\end{cor}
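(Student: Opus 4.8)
The plan is to read the statement off from Proposition 2 by instantiating $\lambda=\omega_1$. Invoking that proposition presupposes that $\mathcal{K}_{\omega_1}$ has SAP, which we carry over here as in Proposition 2; granting this, the only two things left to verify in order to apply the chain-condition clause are: (i) the cardinal-arithmetic assumption $\lambda^{<\lambda}=\lambda$, i.e. $\omega_1^{<\omega_1}=\omega_1$; and (ii) the counting assumption that for every $\delta<\omega_1$ there are at most $\omega_1$ structures from $\mathcal{K}$ with universe $\delta$. Once both hold, Proposition 2 delivers that $\operatorname{Fn}(S,\mathcal{K},\omega_1)$ is $\omega_1^+$-c.c., and $\omega_1^+=\omega_2$, which is the desired conclusion.

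For (i), every $\delta<\omega_1$ is countable, so the exponents $|\delta|$ range over cardinals $\le\aleph_0$ and the largest term is the relevant one: $\omega_1^{<\omega_1}=\omega_1^{\aleph_0}$. Under CH we have $\omega_1=2^{\aleph_0}$, hence $\omega_1^{\aleph_0}=(2^{\aleph_0})^{\aleph_0}=2^{\aleph_0\cdot\aleph_0}=2^{\aleph_0}=\omega_1$. Thus $\omega_1^{<\omega_1}=\omega_1$, as wanted.

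For (ii), fix $\delta<\omega_1$ and let $R_1,\dots,R_k$ be the relation symbols of the (finite) language, with arities $n_1,\dots,n_k$. A structure in $\mathcal{K}$ with universe $\delta$ is determined by the interpretations $R_i^{\mathbb{A}}\subseteq\delta^{n_i}$, so the number of such structures is at most $\prod_{i=1}^{k}2^{|\delta|^{n_i}}$. If $\delta$ is finite this product is finite; if $\delta$ is infinite then $|\delta|=\aleph_0$ and $|\delta|^{n_i}=\aleph_0$, so the product equals $\bigl(2^{\aleph_0}\bigr)^{k}=2^{\aleph_0}$, which is $\omega_1$ under CH. Either way there are at most $\omega_1$ structures with universe $\delta$, establishing (ii).

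The only point requiring care is the one emphasized immediately after Proposition 2: the relevant count in (ii) is of \emph{all} structures on the ordinal $\delta$, not merely of isomorphism types. This is precisely where finiteness of the language does the work, pinning the number of interpretations of each relation at $2^{\aleph_0}$ and hence at $\omega_1$ under CH. Apart from this bookkeeping there is no genuine obstacle: the substantive content — the $\Delta$-system argument together with the strong-amalgamation step used to merge two conditions that agree on the root of the system — is already packaged inside Proposition 2.
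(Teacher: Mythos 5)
Your proof is correct and matches the paper's intended derivation: the Corollary is presented as an immediate instantiation of Proposition 2 at $\lambda=\omega_1$, and your two verifications --- that CH gives $\omega_1^{<\omega_1}=\omega_1^{\aleph_0}=(2^{\aleph_0})^{\aleph_0}=\omega_1$, and that a finite relational language admits at most $2^{\aleph_0}=\omega_1$ structures on any fixed countable universe $\delta$ --- are exactly what that instantiation requires. Your explicit flagging of the implicit SAP hypothesis on $\mathcal{K}_{\omega_1}$, and your remark that the count in (ii) is of all structures on $\delta$ rather than isomorphism types, faithfully track the paper's own caveat following Proposition 2.
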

	
	For start we describe structures added by $\operatorname{Fn}(S,\K,\omega)$.
	
	\begin{prop}
		Let $\mathbb{P}=\operatorname{Fn}(\omega,\K,\omega)$, and $G\subseteq \mathbb{P}$ be a generic filter. Then $\bigcup G$ is a structure with the universe $\omega$, isomorphic to the Fra\"iss\'e limit $\mathbb{K}$ of the class $\K_\omega$.
	\end{prop}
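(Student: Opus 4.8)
The plan is to show that the generic structure $\bigcup G$ is a countable structure satisfying the two characterizing properties of the Fra\"iss\'e limit: it is \emph{$\mathcal{K}_\omega$-universal} (every member of $\mathcal{K}_\omega$ embeds into it) and \emph{ultrahomogeneous} (every isomorphism between finite substructures extends to an automorphism), or equivalently that it has the \emph{extension property}. By the uniqueness part of Fra\"iss\'e's theorem, this suffices to identify $\bigcup G$ with $\mathbb{K}$.

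First I would verify that $\bigcup G$ is a genuine structure in $\mathcal{K}$ with universe $\omega$. Since $G$ is a filter, any two conditions in $G$ are compatible, hence agree on the intersection of their universes, so $\bigcup G$ is a well-defined relational structure. That its universe is all of $\omega$ follows from the density of $D_n = \{\mathbb{A} : n \in A\}$ for each $n < \omega$; these are dense because given any condition, its finite universe can be enlarged by one point using JEP together with the assumption that $\mathcal{K}_\omega$ has infinitely many isomorphism types (so that a one-point extension always exists in $\mathcal{K}$). Every finite substructure of $\bigcup G$ lies inside some single condition (a finite union of conditions from $G$ is refined by a common lower bound in $G$), hence belongs to $\mathcal{K}$; since $\mathcal{K}$ is hereditary, $\bigcup G \in \mathcal{K}_\omega$ in the sense that all its finite pieces are in $\mathcal{K}$.

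The heart of the argument is a density computation establishing the extension property. Fix a finite substructure $\mathbb{B} \subseteq \bigcup G$ and a one-point extension $\mathbb{B} \subseteq \mathbb{C}$ with $\mathbb{C} \in \mathcal{K}$. I would define, for each such configuration, the set $E_{\mathbb{B},\mathbb{C}}$ of conditions $\mathbb{A}$ which contain an isomorphic copy of $\mathbb{C}$ over $\mathbb{B}$, and argue it is dense below any condition realizing $\mathbb{B}$. Given a condition $\mathbb{A}$ with $\mathbb{B} \subseteq \mathbb{A}$, the Amalgamation Property produces an amalgam $\mathbb{D}$ of $\mathbb{A}$ and $\mathbb{C}$ over $\mathbb{B}$; one then needs to realize $\mathbb{D}$ as a condition, i.e.\ to embed the new point of $\mathbb{C}$ as a fresh element of $S = \omega$ not already in $A$. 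This is where SAP is essential: strong amalgamation guarantees the amalgam identifies no points outside $\mathbb{B}$, so the new point genuinely corresponds to an element of $\omega \setminus A$, and the result is a legitimate condition extending $\mathbb{A}$. Iterating over all finite $\mathbb{B}$ and all one-point extensions $\mathbb{C}$ (countably many isomorphism types, hence countably many dense sets to meet) shows $\bigcup G$ has the extension property, whence it is universal and ultrahomogeneous.

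The main obstacle I anticipate is the bookkeeping in the density argument: one must be careful that the amalgamation, a priori an abstract operation in $\mathcal{K}$, can be concretely carried out on the fixed underlying set $\omega$ without collapsing the copy of $\mathbb{A}$ we started with. Precisely here SAP does the work, preventing the forced identification of a new element with an old one; without it the amalgam might force points together and fail to yield a condition extending $\mathbb{A}$ (indeed the Kubi\'s example with linear graphs shows that AP alone is insufficient and the forcing can misbehave). The remaining steps — closure of finite substructures under the class, genericity meeting each dense set, and the appeal to Fra\"iss\'e uniqueness — are routine once the extension-property density is in place.
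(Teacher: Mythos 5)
Your overall architecture (universe covered by density of the $D_n$'s, extension property via density of the sets $E_{\mathbb{B},\mathbb{C}}$, then Fra\"iss\'e uniqueness) matches the paper's, but there is one genuine error: you declare SAP ``essential'' for the density step, whereas the proposition is stated, and proved in the paper, under the standing assumptions only (JEP, AP, hereditarity, etc.) --- SAP is \emph{not} a hypothesis here, so a proof that invokes it establishes a strictly weaker statement. Moreover, the reason you give for needing it is mistaken. The extension property does not require the witness for the new point of $\mathbb{C}$ to be a \emph{fresh} element: one only needs \emph{some} embedding $g:\mathbb{C}\hookrightarrow \mathbb{A}'$ with $g\restriction B = i$, and $g$ is perfectly free to reuse old points of $A$. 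Concretely, let $\mathbb{D}$ be any AP-amalgam of $\mathbb{A}$ and $\mathbb{C}$ over $\mathbb{B}$, with embeddings $f':\mathbb{A}\to\mathbb{D}$, $g':\mathbb{C}\to\mathbb{D}$. Since $f'$ is injective, no two points of $A$ are ever identified; relabel $\mathbb{D}$ so that $f'[A]$ carries its original labels from $A$ and the points of $D\setminus f'[A]$ (if any) receive fresh labels in $\omega\setminus A$. The result is a condition $\mathbb{A}'\le \mathbb{A}$ in which $g'$ becomes the required witness, whether or not the amalgam identified the new point of $\mathbb{C}$ with an old point of $A\setminus i[B]$. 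So plain AP suffices, and your appeal to the Kubi\'s linear-graph example is off target for this proposition: that class is not hereditary (so it violates the standing assumptions), and the pathology it exhibits --- collapsing $|S|$ --- concerns uncountable $S$ and the failure of c.c.c.\ (Proposition 1), not the identification of the generic structure for $S=\omega$.

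For comparison, the paper sidesteps the amalgamation bookkeeping you worried about by a different device (attributed to Kubi\'s): given a condition $\mathbb{A}$ with $A\subseteq\omega$ finite, first extend it to a structure $\Omega$ on all of $\omega$ isomorphic to the limit $\mathbb{K}$, use the injectivity (extension property) of $\mathbb{K}$ to find $g:\mathbb{B}'\to\Omega$ with $i=g\circ f$, and then cut back down to the finite condition $\mathbb{A}\cup g[\mathbb{B}']\subseteq\Omega$. This replaces the explicit amalgam-and-relabel argument by a single appeal to the known properties of $\mathbb{K}$. Your count of dense sets (countably many, since $\mathcal{K}_\omega$ has countably many isomorphism types) agrees with the paper's closing remark that Rasiowa--Sikorski already suffices; once the SAP claim is excised and replaced by the relabeling argument above, your proof is correct.
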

	
	\begin{proof}
		In order to ensure that $\bigcup G$ is defined on all $\omega$, we must verify density of the sets
		$$D_n=\{\mathbb{A} \in \mathbb{P}|\; n \in A \},$$
		for $n<\omega$, which is straightforward. To see that we obtain the Fra\"iss\'e limit we must check that each finite extension of a finite substructure is realized. For this purpose, set
		$$E^{i,f}_{\mathbb{B}}=\{\mathbb{A}|\;i:\mathbb{B} \hookrightarrow \mathbb{A} \text{ is an embedding }\implies
		\exists\; g:\mathbb{B}'\hookrightarrow \mathbb{A}\; \text{$g$ is an embedding, and } i=g\circ f \},$$
		where $\mathbb{B}, \mathbb{B}' \in \K$, $f:\mathbb{B} \hookrightarrow \mathbb{B}'$ is an embedding, and $i:\mathbb{B} \hookrightarrow \omega$ is any $1-1$ function.  We also make a technical assumption that both $B$ and $B'$ are disjoint from $\omega$. One could directly apply the AP to show that the sets $E^{i,f}_{\mathbb{B}}$ are dense, however it may be easier to make use of a simple trick, due to W. Kubi\'s.\\
		Fix a structure $\mathbb{A} \in \mathbb{P}$, and assume that $i$, $\mathbb{B}$, $\mathbb{B}'$, $f$ are as above. If $i:\mathbb{B}\hookrightarrow \mathbb{A}$ is not an embedding, then $\mathbb{A} \in E^{i,f}_{\mathbb{B}}$, and we are done. So suppose that $i$ is an embedding. Since $A\subset \omega$, we may extend $\mathbb{A}$ to a structure $\Omega$, isomorphic to $\mathbb{K}$, with the universe $\omega$. Then, since this structure is injective, there exists $g:\mathbb{B}'\rightarrow \Omega$, such that $i=g\circ f$. If we define $\mathbb{A}'=\mathbb{A} \cup g[\mathbb{B}']\subseteq \Omega$, then $\mathbb{A}' \in E^{i,f}_{\mathbb{B}}$. \\
		The universality can either be proved using a similar technology, or we can apply the general fact that in case of relational languages, universality follows from injectivity.
	\end{proof}
	
	Note that we used only countably many dense subsets of $\mathbb{P}$, so the Proposition works under Rasiowa-Sikorski Lemma, without requiring $G$ being "generic" in the sense of the forcing theory.
	
	\section{Review of the Fra\"iss\'e theory}
	
	For the reader's convenience we recall basic notions from the Fra\"iss\'e theory. More detailed introduction can be found in \cite{hod}, or in \cite{kub} in more abstract, category-theoretic setting.
	
	\begin{defin} For a class of structures $\K$ we will say that
		\begin{itemize}
			\item $\K$ has the \emph{Joint Embedding Property} (JEP), if for each $a,b \in \K$ there exists $c \in \K$ such that
			there exist embeddings $a\hookrightarrow c$, and $b \hookrightarrow c$.\\
			\begin{center}
				\begin{tikzcd}
					& a  \ar[dr]
					&
					& \\
					& 
					&c
					&
					&
					&
					\\
					& b  \ar[ur]
					&
					& 
				\end{tikzcd}
			\end{center}
			\item $\K$ has the \emph{Amalgamation Property} (AP), if for each pair of embeddings $f:a\hookrightarrow b$, $g:a\hookrightarrow c$, there exists $d\in \K$, together with a pair of embeddings $f':b\hookrightarrow d$, $g':c\hookrightarrow d$, such that $f'\circ f = g'\circ g$.\\
			\begin{center}
				\begin{tikzcd}
					& b  \ar[dr,dashed, "f'"]
					&
					& \\
					a \ar[ur, "f"] \ar[dr, swap, "g"] 
					& 
					&d 
					&
					&
					&
					\\
					& c  \ar[ur, dashed, swap, "g'"]
					&
					& 
				\end{tikzcd}
			\end{center}
			\item $\K$ is \emph{hereditary} if for any $b\in \K$ and any embedding $a \hookrightarrow b$, $a \in \K$.
		\end{itemize}
	\end{defin}
	Notice, that if $\K$ has a weakly initial object, namely a structure which embeds into any element of $\K$, then the JEP follows from the AP. This assumption is typically satisfied, however there are classes with the AP but not the JEP -- for instance the class of all finite fields.
	
	\begin{defin}
		A class $\K$ is a \emph{Fra\"iss\'e class} if is satisfies all properties listed above, and has at most countably many models, up to isomorphism.
	\end{defin}
	
	For checking the Amalgamation Property, we can assume that both initial arrows are identity inclusions. The latter ones however, not always are inclusions, since structures may be "glued together". From time to time we are going to use variants of the AP, which ensures that they aren't.
	
	\begin{defin}
		A class $\K$ has the \emph{Strong Amalgamation Property} (SAP) if for any structures $a,b,c \in \K$ and embeddings $f:a\hookrightarrow b$, $g:a\hookrightarrow c$, there exists $d\in \K$, together with embeddings $f':b\hookrightarrow d$, $g':c\hookrightarrow d$, satisfying $f'\circ f=g' \circ g$, and moreover $\rg{f'}\cap \rg{g'}=\rg{(f'\circ f)}$.
	\end{defin}
	
	The Strong Amalgamation Property essentially means that given any structure $A \in \K$, and two extensions $B_0\supseteq A$, $B_1\supseteq A$, such that $B_0\cap B_1 =A$, we can find bigger $C\in \K$, containing $B_0\cup B_1$ (often $C=B_0\cup B_1$). A close relative of the SAP is the \emph{Splitting Property}. We will say that two embeddings $f:A\hookrightarrow B$ and $g:A\hookrightarrow C$ are \emph{isomorphic}, if there exists an isomorphism $h:B\hookrightarrow C$, such that $h\circ f = g$. The SP is just the SAP for pairs of isomorphic extensions.
	
	\begin{defin} \label{SP}
		A class $\K$ has the \emph{Splitting Property} (SP) if for any structures $a,b,c \in \K$ and isomorphic embeddings $f:a\hookrightarrow b$, $g:a\hookrightarrow c$, there exists $d\in \K$, together with embeddings $f':b\hookrightarrow d$, $g':c\hookrightarrow d$, satisfying $f'\circ f=g' \circ g$, and moreover $\rg{f'}\cap \rg{g'}=\rg{(f'\circ f)}$.
	\end{defin}

	\par  For an infinite structure $A$, we denote by $\operatorname{Age}{A}$ the class of finite substructures of $A$. We will say that $A$ is \emph{locally finite} if each finite subset of $A$ is contained in a finite substructure. This will be the case for example when we are working with purely relational language. 
	
	\begin{defin} A countable structure $A$ is 
		\begin{itemize}
			\item \emph{$\K$-universal}, if for every structure $a \in \K$, there exists an embedding $a \hookrightarrow A$.
			\item \emph{injective}, if for any pair of embeddings $f: a \hookrightarrow A$, $g:a\hookrightarrow b$, where $a,b \in \operatorname{Age}{A}$, there exists an embedding $F:b \hookrightarrow A$, such that $F\circ g=f$.\\
			\begin{center}
				\begin{tikzcd}
					a\arrow[r, "f"]\arrow[dr, swap, "g"]
					& A
					\\& b\arrow[u, dashed,swap, "F"]
				\end{tikzcd}
			\end{center}
			
			\item \emph{homogeneous}, if any isomorphism between finite substructures of $A$ extends to an automorphism of $A$.
		\end{itemize}
	\end{defin}

	\begin{thm}[Fra\"iss\'e, \cite{fraisse}]

		If $\K$ is a Fra\"iss\'e class, then there exists a unique up to isomorphism countable, homogeneous structure $\mathbb{K}$ with $\age{\mathbb{K}}=\K.$
	\end{thm}
	
	The Strong Amalgamation Property for a Fra\"iss\'e class $\K$, with the Fra\"iss\'e limit $\mathbb{K}$, corresponds to a certain property of $\mathbb{K}$. 
	
	\begin{defin} \label{algebraicity}
		The structure $\mathbb{K}$ \emph{has no algebraicity} if for each finite substructure $F\subseteq \mathbb{K}$, and for each $f\in \mathbb{K}\setminus F$, $f$ has infinite orbit under the action of the pointwise stabilizer of $F$ in $\aut\mathbb{K}$. 
	\end{defin}
	
	\begin{thm}[Thm. 7.1.8, \cite{hod}] \label{algebraicity2}
		Let $\K$ be a Fra\"iss\'e class with the Fra\"iss\'e limit $\mathbb{K}$. The following are equivalent.
		\begin{enumerate}
			\item $\K$ has the SAP.
			\item $\mathbb{K}$ has no algebraicity.
		\end{enumerate}
	\end{thm}
	\subsection{Examples} 
	
	Let us review some examples. Typically the only non-trivial condition from the definition of a Fra\"iss\'e class is the AP, so we will briefly describe why it holds for each of the subsequent classes. Verification of other conditions is easy.
	
	\subsubsection{Linear Orders}
	
	\begin{prop} The class of all finite linear orders has the SAP.
	\end{prop}	
	\begin{proof}
		Take a pair of finite linear orders $(K_0,\le_0),(K_1,\le_1)$, such that $\le_0$ and $\le_1$ agree on $L=K_0\cap K_1$. We want to find an ordering $\le_2$ on $K_0 \cup K_1$ extending both $\le_0$ and $\le_1$. This requirement determines $\le_2$ on all pairs, except for ones of the form $\{x_0,x_1\}$, where $x_i\in K_i \setminus L$, for $i=0,1$. We put $x_1<_2x_0$ if there is $y \in L$, such that $x_1<_1y<_0x_0$, and $x_0<_2x_1$ otherwise. It is routine to check that this defines a linear order on $K_0\cup K_1$.
	\end{proof}
	
	It is easy to see that the corresponding Fra\"iss\'e limit is a countable, dense linear order without endpoints. These conditions are satisfied by the ordering of the rationals $(\mathbb{Q},\le)$, and since the Fra\"iss\'e limit is unique, it follows that it is isomorphic to $(\mathbb{Q},\le)$. We have proved the old theorem of Cantor:
	
	\begin{cor}[Cantor, \cite{cantor}]
		Any countable, dense linear order without endpoints is isomorphic to $(\mathbb{Q},\le)$.
	\end{cor}

	\subsubsection{Graphs} \label{graphs}
	In the case of (undirected) graphs, verification of the SAP is straightforward: we just take the set-theoretic union and add no edges.	What is the Fra\"iss\'e limit? Clearly, it is a countably infinite graph $\mathcal{R}$, which satisfies the following axiom:
	\begin{center}
		For each pair of disjoint, finite subsets $A,B \subseteq \mathcal{R}$, there exists a point $x \in \mathcal{R} \setminus(A\cup B)$, connected with every point in $A$, and with no point in $B$. 
	\end{center}
	
	An easy argument by induction shows that this property implies injectivity, so by Lemma 1 it determines $\mathcal{R}$ uniquely, up to isomorphism. 
	
	\par Let $K_n$, $n\ge3$, denote the complete graph on $n$ vertices. We will say that a graph is \emph{$K_n$-free}, if it has no induced subgraph isomorphic to $K_n$. The class of all $K_n$-free graphs is a Fra\"iss\'e class. Let $\mathcal{R}_n$ be the corresponding countable, homogeneous graph. A deep result by Lachlan and Woodrow shows that they essentially exhaust examples of Fra\"iss\'e classes of finite graphs. For a graph $G$, we denote by $G^c$ its complement -- the graph obtained by replacing every edge with non-edge, and the other way around.
	
	\begin{thm}[Lachlan-Woodrow, \cite{lachlan-woodrow}]
		Let $\mathcal{U}$ be a countably infinite, homogeneous graph. Then one of the graphs $\mathcal{U}$ and $\mathcal{U}^c$ is isomorphic to either $\mathcal{R}$, $\mathcal{R}_n$, for $n\ge3$, or a disjoint union of complete graphs of the same size.
	\end{thm}
	
	\subsubsection{Boolean Algebras}
	The class of all finite Boolean algebras\index{Boolean algebra} is a Fra\"iss\'e class. The AP follows from the existence of free products with amalgamation in the category of Boolean algebras, which is described in \cite{bool} Ch. 11. The corresponding homogeneous algebra is the countable, atomless Boolean algebra.

	\subsubsection{Partial Orders}
	The class of all finite partial orders\index{partial order} is a Fra\"iss\'e class with the resulting homogeneous structure known as the \emph{random partial order}.
	
	\begin{prop}
		The class of all partial orders has the SAP.
	\end{prop}
	\begin{proof}
		Fix some partial order $(\mathbb{P},\le)$ and consider two its extensions $(\mathbb{P},\le) \subseteq (\mathbb{P}_0,\le_0), (\mathbb{P}_1,\le_1)$, with $\mathbb{P}=\mathbb{P}_0\cap \mathbb{P}_1$. We define a relation $\le^*$ on $\mathbb{P}_0\cup \mathbb{P}_1$ by the conditions
		$$x_0\le^*x_1 \iff \exists p \in \mathbb{P}\; x_0 \le_0 p \le_1 x_1,$$
		$$x_1\le^*x_0 \iff \exists p \in \mathbb{P}\; x_1 \le_1 p \le_0 x_0.$$
		Verification of transitivity is straightforward, and so is to check that 
		$$\forall x,y \; (x\le^*y \wedge y\le^*x \implies x=y).$$
		Therefore $\le^*$ is a partial ordering of $\mathbb{P}_0 \cup \mathbb{P}_1$.
	\end{proof}
	
	\subsubsection{Groups}
	
	Somewhat more involved Fra\"iss\'e class is the class of finite groups. The amalgamation can be proved using so-called \emph{permutation products} \cite{permprod}. Resulting group is known as the Hall's universal locally finite group, and was first described by Philip Hall in 1959 \cite{hall}.
	
	\par Things are simpler in the case of abelian groups. In this case we can see the AP via reduced products -- for two finite abelian groups $B_0$, $B_1$ with $B_0\cap B_1 =A$ let
	$$E=B_0\times B_1 / \langle(a,-a)|\; a \in A\rangle$$
	
	If we identify $B_0$ and $B_1$ with their natural copies inside $E$, then $E$ witnesses the AP for inclusions
	$A\subseteq B_0$ and $A\subseteq B_1$.	
	
	\begin{prop}
		The group $\mathbb{A}=\displaystyle{\bigoplus_{i<\omega}\mathbb{Q}/\mathbb{Z}}$ is the Fra\"iss\'e limit of the class of all finite abelian groups.
	\end{prop}
	\begin{proof}
		
		First, see that since each finite abelian group is a direct sum of finite cyclic groups, it can be embedded into $\mathbb{A}$. Moreover, each finitely generated subgroup of $\mathbb{A}$ is finite. Why is that? The only way for a finitely generated abelian group to be infinite, is to have an element of an infinite order, but $\mathbb{A}$ has no elements of infinite order. This shows that $\age{\mathbb{A}}$ is exactly the class of finite abelian groups. The group $\mathbb{A}$ is divisible, so it is injective as a $\mathbb{Z}$-module.
		It is tempting to conclude that since $\mathbb{Z}$-modules are just abelian groups, the proof is completed. However, the standard definition of an injective module refers to all group homomorphisms, while our definition of an injective structure takes into account only 1-1 homomorphisms. 
		\par Fix a group monomorphism $f:A_0\hookrightarrow \mathbb{A}$, and a finite group $B \ge A_0$. We want to extend $f$ to $\overline{f}:B\hookrightarrow \mathbb{A}$, keeping it 1-1. We can proceed by induction on the number of generators of $B$, so we can assume that $B$ is generated by the set $A_0\cup \{b\}$, for some $b \in B$. Let $\overline{f}$ be an extension of $f$ obtained from the fact that $\mathbb{A}$ is injective in the algebraic sense. If $\overline{f}$ is 1-1, we are done, so suppose that for some expression $a+b \neq 0$, $\overline{f}(a+b)=0$. By replacing $b$ with $a+b$, we can assume that $\overline{f}(b)=0$. Now notice, that groups $\langle b \rangle$ and $A_0$ have trivial intersection in $B$. Indeed, otherwise for some integer $k$, and $a \in A_0$, we would have $k\cdot b = a$. Now applying $\overline{f}$ both sides, we obtain $\overline{f}(a)=f(a)=0$, and so $a=0$. We may send $b$ to some non-zero element of $\mathbb{A}$, by a homomorphism $g:B\hookrightarrow \mathbb{A}$, which is zero on $A_0$. From the remarks above it is clear that $\overline{f}+g:B\hookrightarrow \mathbb{A}$ is the monomorphism we were looking for.	
	\end{proof}

	\subsubsection{Metric Spaces}
	So far we have been looking only at structures in finite languages. We will call a metric space $(X,d)$ \emph{rational}, if all distances between the points of $X$ are rational numbers. The class of all rational metric spaces is a class of models of a first order theory, in the language consisting of countably many binary relations $d_q$, for all rationals $q>0$, where relation $d_q(x,y)$ is interpreted as  \emph{distance between $x$ and $y$ is at least $q$}. The resulting homogeneous space $(\mathbb{U},d)$ is known as \emph{the rational Urysohn space}, and its completion $\overline{\mathbb{U}}$, as the \emph{Urysohn space}. The space $\overline{\mathbb{U}}$ is uniquely characterized by the following conditions.
	
	\begin{itemize}
		\item $\overline{\mathbb{U}}$ contains an isometric copy of any finite metric space.
		\item Each isometry between between finite subspaces of $\overline{\mathbb{U}}$ extends to a full isometry of $\overline{\mathbb{U}}$ into itself.
	\end{itemize}

	\begin{prop}
		The class of all finite, rational metric spaces has the SAP.
	\end{prop}
	
	\begin{proof}
		Using induction, we can reduce our task to amalgamating two one-point extensions. Fix a finite, rational metric space $(X,d)$, and two extensions $(X_1,d_1)$, $(X_2,d_2)$, where $X_i=X\cup \{x_i\}$, for $i=1,2$, and metrics $d_1$, $d_2$ agree with the metric $d$ on $X$. We want to set the rational distance $q$ between $x_1$ and $x_2$, so that the triangle inequality will hold. This reduces to ensuring that
		$$\forall \; x \in X \; d_1(x,x_1)+d_2(x,x_2) \ge q,$$
		$$\forall \; x \in X\; d_1(x,x_1)+q \ge d_2(x,x_2),$$
		and
		$$\forall \; x \in X\; d_2(x,x_2)+q \ge d_1(x,x_1).$$
		This in turn is just 
		$$\operatorname{dist}(x_1,X\setminus \{x_1\})+\operatorname{dist}(x_2,X\setminus \{x_2\})\ge q\ge |\operatorname{dist}(x_1,X\setminus \{x_1\})-\operatorname{dist}(x_2,X\setminus \{x_2\})|.$$
		Clearly we can find $q>0$ with this property.
	\end{proof}
	
	It makes sense to consider metric spaces with distances restricted to other countable sets. Given any countable subset $D \subseteq [0,\infty)$, let $\mathcal{M}_D$ be the class of finite metric spaces with distances in $D$. While $\mathcal{M}_D$ will always satisfy the SP, it turns out that the AP for $\mathcal{M}_D$ is equivalent to some rather technical condition of $D$, described in \cite{apmetric}. 
	
	\section{Results about rigidity}

	The generic structure added by $\operatorname{Fn}(\omega,\K,\omega)$ is homogeneous, so it can be of some surprise, that forcing on uncountable set gives rise to a rigid structure, at least the typical cases. This is obviously not true if, for example, $\K$ is the class of all finite sets, but it seems to be true in all sufficiently nontrivial cases. This is proved in the first subsection. In the second subsection, we study linear orders added by forcing with countable support, and show that they are not only rigid, but also remain so in any generic extension via a c.c.c. forcing. Note that this is in contrast with the "finite-support-generic" linear orders since, as proved by Baumgartner \cite{baum}, under CH we can add a nontrivial automorphism to any $\omega_1$-dense separable linear order, using a c.c.c. partial order. Recall that a linear order is $\omega_1$-dense, if every open interval has cardinality $\omega_1$.

	\subsection{$\operatorname{Fn}(\omega_1,\K,\omega)$}

	We prove that the  uncountable partial order and the uncountable undirected graph added by the forcing $\operatorname{Fn}(\omega_1,\K,\omega)$ are rigid. Proofs for linear orders, directed graphs, tournaments or finite rational metric spaces are all easy modifications of either of these.
	
	\begin{thm}	
		Let $\mathcal{F}$ be the class of (undirected) graphs, and $S$ be an uncountable set. Then the generic graph added by $\operatorname{Fn}(S,\mathcal{F},\omega)$ is rigid.
	\end{thm}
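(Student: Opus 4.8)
The plan is to assume toward a contradiction that in some generic extension $V[G]$ the generic graph $\Gamma$, with universe $S$, admits a nontrivial automorphism, and to defeat it by exploiting that, since $\mathcal{F}$ is the class of \emph{all} graphs, a condition is nothing but a finite symmetric $\{0,1\}$-valued partial function on $[S]^2$. Thus $\operatorname{Fn}(S,\mathcal{F},\omega)$ is literally a Cohen forcing whose coordinates are the pairs $\{x,y\}\in[S]^2$, and the adjacencies of distinct vertices to disjoint sets of coordinates are mutually generic. First I would fix a name $\dot\sigma$ and a condition $p\Vdash$ ``$\dot\sigma$ is a nontrivial automorphism''; refining $p$ by a routine density argument (some point is moved), I obtain $q\le p$ and distinct $a,b\in S$ with $q\Vdash\dot\sigma(\check a)=\check b$.

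Next I would pass to a countable elementary submodel $M\prec H(\theta)$ with $q,\dot\sigma,a,b,\mathbb{P}\in M$. Since $\operatorname{Fn}(S,\mathcal{F},\omega)$ is c.c.c.\ (indeed Knaster, as shown above), every maximal antichain lying in $M$ is countable and hence contained in $M$; consequently, for each $v\in M\cap S$ the value $\dot\sigma(\check v)$ is decided by an antichain inside $M$, so $\sigma$ maps $M\cap S$ into itself, and $\sigma\restriction(M\cap S)\in M[G\cap\mathbb{P}_M]$ (and symmetrically for $\sigma^{-1}$), where $\mathbb{P}_M$ is the Cohen forcing on the coordinates $[M\cap S]^2$. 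The induced graph on $M\cap S$ is exactly the one read off from $G\cap\mathbb{P}_M$.

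The heart of the argument is to show that $\sigma$ fixes every vertex outside $M$. Fix $c\in S\setminus M$ and suppose $\sigma(c)=d$. As above $d\notin M$, and since $c\ne d$ the two coordinate families $\{\{c,v\}:v\in M\cap S\}$ and $\{\{d,v\}:v\in M\cap S\}$ are disjoint from each other and from $[M\cap S]^2$; hence the neighbourhood traces $N(c)\cap M$ and $N(d)\cap M$ are \emph{mutually Cohen-generic over} $M[G\cap\mathbb{P}_M]$. On the other hand, applying $\sigma$ to the edges $\{c,v\}$ and using $\sigma(v)\in M\cap S$ gives, for all $v\in M\cap S$, the equivalence $c\sim v\iff d\sim\sigma(v)$, that is $N(c)\cap M=(\sigma\restriction M)^{-1}[N(d)\cap M]$ with $\sigma\restriction M\in M[G\cap\mathbb{P}_M]$. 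This places the first generic real inside $M[G\cap\mathbb{P}_M][N(d)\cap M]$, contradicting mutual genericity; hence $\sigma(c)=c$. I expect this mutual-genericity step to be the main obstacle: the care lies in justifying that the relevant coordinates really yield mutually generic Cohen reals over $M[G\cap\mathbb{P}_M]$ (a product/permutation argument for Cohen forcing) and that $\sigma\restriction M$ genuinely lies in that model.

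Finally I would close the argument. Having shown $\sigma(c)=c$ for every $c\in S\setminus M$, edge preservation gives, for each such $c$, the equivalence $a\sim c\iff\sigma(a)\sim\sigma(c)$, i.e.\ $a\sim c\iff b\sim c$; so $a$ and $b$ would have identical adjacency to all of $S\setminus M$. But a straightforward density argument shows that in $V[G]$ the set $\{c\in S:\ a\sim c\ \text{and}\ b\not\sim c\}$ is uncountable, so it meets $S\setminus M$; any such $c$ contradicts $a\sim c\iff b\sim c$. This forces $\sigma$ to be the identity, completing the proof.
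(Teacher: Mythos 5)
Your proposal is correct, but it takes a genuinely different route from the paper's proof. The paper stays entirely inside the poset: from $p\Vdash$ ``$\dot h$ is a non-identity automorphism'' it first observes that every ground-model infinite set realizes all adjacency types over pairs (so $\dot h$ cannot be the identity on any ground-model infinite set), extracts uncountably many conditions $p_s\Vdash \dot h(s)=\overline{s}\neq s$, thins them to a $\Delta$-system with root $R$ on which the graph structures agree, and then amalgamates $p_s,p_t$ over $R$ so that $\{s,t\}$ is an edge while $\{\overline{s},\overline{t}\}$ is not --- contradicting homomorphy with a single two-condition amalgamation. You instead exploit the identification of $\operatorname{Fn}(S,\mathcal{F},\omega)$ with Cohen forcing on the coordinate set $[S]^2$ (pedantically, graph conditions are the Cohen conditions that are total on $[A]^2$ for a finite vertex set $A$; this is a dense subset, which suffices for the product decompositions you use), a countable $M\prec H(\theta)$, c.c.c.\ to get $\sigma[M\cap S]=M\cap S$ and $\sigma\restriction (M\cap S)\in M[G\cap\mathbb{P}_M]$ (one should choose the antichain-assignment $v\mapsto A_v$ inside $M$ so that the restriction is uniformly computable there), and the product lemma to make the traces $N(c)\cap M$ and $N(d)\cap M$ mutually Cohen-generic over $M[G\cap\mathbb{P}_M]$ --- the step you rightly flagged, and it does go through, because the three coordinate sets $[M\cap S]^2$, $\{\{c,v\}:v\in M\cap S\}$, $\{\{d,v\}:v\in M\cap S\}$ are pairwise disjoint (note $d\notin M$ since $\sigma\restriction(M\cap S)$ is onto $M\cap S$), and the fact that $c,d$ are chosen in $V[G]$ is harmless since the contradiction is derived separately for each fixed pair. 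The trade-off between the two arguments: the paper's $\Delta$-system-plus-amalgamation proof is shorter and, as the paper remarks, transfers with trivial modifications to partial orders, linear orders, tournaments, and rational metric spaces, because it only needs the freedom to amalgamate two conditions over a common root; your proof is tied to the class of all graphs being ``free'' (the forcing genuinely factors as a product over disjoint coordinate sets, which fails for, say, linear orders), but in exchange it yields a stronger intermediate statement: in $V[G]$ every automorphism of the generic graph is the identity off the countable set $M\cap S$, after which your closing density argument (or equivalently the paper's observation that $a$ and $b$ are adjacency-separated by some vertex in any prescribed uncountable ground-model set, such as $S\setminus M$) finishes the proof.
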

	\begin{proof} Assume that $p \Vdash "\dot{h}:(S,\dot{E(S)})\rightarrow (S,\dot{E(S)}) \text{ is a non-identity isomorphism}"$. It is easy to check that 
		for every infinite set $F\subseteq S$ from the ground model, and every two different $s,t \in S$, there exists a vertex $e \in F$, with $\{s,e\} \in E(S)$, and $\{t,e\} \notin E(S)$.
		There are clearly uncountably many pairwise disjoint, infinite subsets of $S$ in the ground model, so $h$ must be non-identity on each of them. Therefore there exists an uncountable set 
		$\{p_s|\,s\in S'\subseteq S \}$ of conditions stronger than $p$, with
		$$p_s \Vdash \dot{h}(s)=\overline{s}\neq s.$$
		Without loss of generality we can assume that $\{p_s|\,s\in S'\}$ form a $\Delta$-system with a root $R$, disjoint with $S'$, and the graph structures of all $p_s$ agree on the root. \\
		Fix two different $s,t \in S'$. We can amalgamate $p_s$, and $p_t$ over $R$ in such a way, that $\{s,t\} \in E(S)$, and
		$\{\overline{s},\overline{t}\} \notin E(S)$,
		obtaining some stronger condition $q \in \forcing$. But then $q$ forces, that $\dot{h}$ is not a graph homomorphism.
	\end{proof}

\begin{remark}
	If we were working with tournaments or, more generally, directed graphs, we would have to ensure the corresponding undirected relations:
	$$(s,t) \in E(S),$$
	$$(\overline{s},\overline{t}) \notin E(S).$$
	Then $q$ forces that $\dot{h}$ is not an isomorphism for the exactly same reason. Things are a bit more complicated when we are working with transitive relations, since we need to ensure transitivity in the alamgamation, so we present the full proof for the class of partial orders (the proof for linear orders is obviously reducible to this).
\end{remark}
	
	\begin{thm}	
		Let $\mathcal{F}$ be the class of partial orders, and $S$ be an uncountable set. Then the generic partial order added by $\operatorname{Fn}(S,\mathcal{F},\omega)$ is rigid.
	\end{thm}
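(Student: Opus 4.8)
The plan is to run the graph argument of the previous theorem almost verbatim, replacing the edge/non-edge dichotomy by the order trichotomy ``$s<t$, $t<s$, or $s$ incomparable to $t$'', and paying the price that every relation we impose must respect transitivity. So I begin by assuming toward a contradiction that $p\Vdash\text{``}\dot h$ is a non-identity automorphism of the generic partial order on $S$''. First I would establish the poset analogue of the separation fact used for graphs: for every infinite ground-model $F\subseteq S$ and every distinct $s,t\in S$, density (guaranteed by SAP, equivalently by no algebraicity) forces some $e\in F$ with $s<e$ and $\neg(t<e)$. Exactly as before, since $\dot h$ is an isomorphism this shows $\dot h$ cannot fix any infinite ground-model set pointwise; using uncountably many pairwise disjoint infinite ground-model subsets of $S$ I extract an uncountable family $\{p_s\mid s\in S'\}$ of conditions below $p$ with $p_s\Vdash\dot h(s)=\overline{s}\neq s$, and by extending each $p_s$ I may assume $\overline{s}$ lies in the underlying set of $p_s$.

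Next comes the normalization. I apply the $\Delta$-system lemma to the underlying sets of the $p_s$, obtaining a root $R$ disjoint from $S'$ on which all the poset structures agree. Unlike the graph case I must thin $S'$ further so that all the $p_s$ have a common size and a common isomorphism type over $R$, the isomorphisms matching $s\mapsto s'$ and $\overline{s}\mapsto\overline{s'}$; this is possible because, for each fixed finite size, there are only finitely many isomorphism types of finite posets with distinguished points over the finite set $R$. This uniformity is the engine of the proof: it forces $\overline{s}\notin R$ (otherwise a root-fixing isomorphism would give $\overline{s}=\overline{t}$, contradicting injectivity of $h$), and, crucially, it makes the set of $r\in R$ below $s$ equal to the set below $t$, and likewise the up-sets, and the same for $\overline{s}$ and $\overline{t}$.

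The amalgamation is the crux and the place where transitivity, absent in the graph argument, becomes the main obstacle. Fix distinct $s,t\in S'$ and build $q\leq p_s,p_t$ by strong amalgamation over $R$. Because the down-sets and up-sets of $s$ and of $t$ in $R$ coincide, neither the pair $\{s,t\}$ nor the pair $\{\overline{s},\overline{t}\}$ is forced comparable through a chain in $R$; the coincidence of these up/down sets is precisely what guarantees that adding the relation $s<t$ creates no new relation among elements of $R$ and hence contradicts neither the fixed structure on $R$ nor antisymmetry. The remaining danger is that a chain $\overline{s}\to s\to t\to\overline{t}$ (or its reverse) could leak through the new edge and force $\overline{s}$ and $\overline{t}$ comparable; I control this by having recorded, in the thinning step, the relation between $s$ and $\overline{s}$ inside $p_s$ (equivalently between $t$ and $\overline{t}$), and then choosing among the admissible moves --- orienting the new edge as $s<t$ or as $t<s$, or interchanging which of the two pairs is left incomparable --- so that no such chain arises. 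In every configuration one admissible choice realizes on $\{s,t\}$ a relation different from the relation realized on $\{\overline{s},\overline{t}\}$, and the resulting structure is a legitimate strong amalgam, hence a member of $\mathcal{F}$ and a condition in $\forcing$.

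Finally I conclude: the amalgam $q$ forces $\dot h(s)=\overline{s}$, $\dot h(t)=\overline{t}$, together with, say, $s<t$ but $\neg(\overline{s}<\overline{t})$, so $q$ forces that $\dot h$ does not preserve the order and therefore is not an isomorphism, contradicting the choice of $p$. Hence no such $p$ and $\dot h$ exist, and the generic partial order is rigid. I expect the transitivity bookkeeping in the amalgamation paragraph to be the only genuinely new content over Theorem 1; everything else is a routine transcription of the graph proof.
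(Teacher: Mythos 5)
Your proposal is correct and shares the paper's skeleton (strongly dense ground-model sets to rule out fixing an infinite set pointwise, an uncountable family $p_s \Vdash \dot{h}(s)=\overline{s}\neq s$, a $\Delta$-system with uniformized structure over the root $R$, then an amalgamation that makes $\dot{h}$ order-violating), but your endgame is genuinely different from the paper's. The paper fixes, without loss of generality, the case $\overline{s}>s$, builds a four-point extension $R\subset R\cup\{s,t,\overline{s},\overline{t}\}$ in which $s<t<\overline{t}<\overline{s}$ form a chain, and amalgamates $p_s\cup\{t<\overline{t}\}$ with $p_t\cup\{s<\overline{s}\}$ over this enlarged root, so the resulting condition forces an outright reversal: $s<t$ while $\dot{h}(s)=\overline{s}>\overline{t}=\dot{h}(t)$. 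You instead take the free amalgam over $R$ itself, adjoin the single relation $s<t$, and aim at the mismatch ``$s<t$ while $\overline{s},\overline{t}$ are incomparable''; this is legitimate because conditions sit in the generic structure as \emph{induced} substructures, so incomparability in $q$ persists. Moreover, the case analysis you worry about actually collapses: the transitive closure after adding one edge $s<t$ to a poset consists exactly of the old relations plus the pairs $(x,y)$ with $x\le s$, $t\le y$, and since the up- and down-sets over $R$ of $s$ and $t$ coincide (likewise for $\overline{s}$ and $\overline{t}$), checking the pairs $(\overline{s},\overline{t})$ and $(\overline{t},\overline{s})$ against the three uniform possibilities for the relation of $s$ to $\overline{s}$ (above, below, incomparable) shows $\overline{s}$ and $\overline{t}$ stay incomparable in every case --- one fixed choice of edge works, with no need to reorient or interchange pairs, whereas the paper needs its WLOG case split. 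Two small repairs to your write-up: the separating statement ``there is $e\in F$ with $s<e$ and $\neg(t<e)$'' is inconsistent when $t<s$ (then any $e>s$ lies above $t$), so you should quote, as the paper does, that \emph{every} type over the parameters $\{s,t\}$ is realized in $F$ and choose the separating type according to the relation between $s$ and $t$; and when ruling out $\overline{s}\in R$, note explicitly that $p_s$ and $p_t$ are compatible (their free amalgam is a condition), so $\overline{s}=\overline{t}$ genuinely contradicts the forced injectivity of $\dot{h}$.
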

	
	\begin{proof}
		Assume that $p \Vdash "\dot{h}:(S,\dot{\leq})\rightarrow (S,\dot{\leq}) \text{ is a non-identity isomorphism}"$. It is easy to check that 
		for every infinite set $E\subseteq S$ from the ground model, $\forcing \Vdash 
		"E \text{ is strongly dense}"$. Strongly dense means that for every $s<t \in S$, there exists $e \in E$, such that $s<e<t$, and for every $s,t \in S$ incomparable, there exists $e_i\in E$, $i=0,1,2,3,4$, with $e_0>s$, $e$ incomparable with $t$, $e_1<s,t; $ $e_2<s$, incomparable with $t$; $e_3>s,t$, and $e_4$ incomparable with both $s$ and $t$. Long story short, each type with parameters (not necessarily from $E$) is realized in $E$.
		There are clearly uncountably many pairwise disjoint, infinite subsets of $S$ in the ground model, and $h$ must be non-identity on each of them. Therefore there exists an uncountable set 
		$\{p_s|\,s\in S'\subseteq S \}$ of conditions stronger than $p$, and
		$$p_s \Vdash \dot{h}(s)=\overline{s}\neq s.$$
		Without loss of generality we can assume that $\{p_s|\,s\in S'\}$ form a $\Delta$-system with a root $R$, disjoint with $S'$, and the order structures of all $p_s$ agree on the root. Suppose also, that for each $s\in S'$, $\overline{s}>s$ (the other cases are handled similarily). Since $S'$ is uncountable, we can further thin it out, so that all embeddings of the form $R\subset R\cup \{s\}$ are pairwise isomorphic, and similarly for $\overline{s}$. Recall that two extensions of a given structure $R$ are isomorphic if there is an isomorphism between them, which is identity on $R$. \\
		Fix two different $s,t \in S'$. There exists an extension
		$R\subset R\cup \{s,t,\overline{s},\overline{t}\}$, with $\{s<t<\overline{t}<\overline{s}\}$. We can amalgamate
		$$p_s \cup \{t<\overline{t} \}$$ and 
		$$p_t \cup \{s<\overline{s} \}$$
		over
		$$R\cup \{s<t<\overline{t}<\overline{s}\},$$
		
		to obtain some condition $q\in \forcing$. But then 
		$q \Vdash s<t$, and $q \Vdash \dot{h}(s)>\dot{h}(t)$, exhibiting a contradiction.
	\end{proof}
	
	It is worth to remark that an uncountable linear order obtained this way satisfies some strong variant of rigidity. Following \cite{as} and \cite{ars}, we say that an uncountable separable linear order $(L,\le)$ is \emph{$k$-entangled}, for some $k \in \mathbb{N}$, if for every tuple $\overline{t} \in \{T,F\}^k$, and any family
	$\{(a_0^\xi,\ldots,a_{k-1}^\xi)|\; \xi <\omega_1 \}$ of pairwise disjoint $k$-tuples from $L$, one can find $\xi \neq \eta <\omega_1$, such that for $i=0,\ldots,k-1 \; a_i^\xi \le a_i^\eta$ iff $\overline{t}(i)=T$. This in particular implies that no two uncountable, disjoint subsets of $L$ are isomorphic. The property of being $k$-entangled for all natural $k$ is featured for example by an uncountable set of Cohen reals, added over some model. Martin's Axiom with negation of CH implies that no uncountable set of reals is $k$-entangled for all $k$ \cite{as}.

	\subsection{$\operatorname{Fn}(\omega_2,\mathcal{LO},\omega_1)$}
	
	We prove, assuming CH, that forcing with countable supports on a set of bigger cardinality gives rise to a rigid linear order, for which we cannot add an automorphism using a c.c.c. forcing. While this result holds under CH,  the c.c.c.-absolute rigidity is clearly preserved by any c.c.c. forcing. In effect, the existence of a rigid $\omega_2$-dense linear order is consistent with any possible value of $2^\omega$, and for example $MA+2^\omega=\kappa$, for any $\kappa=\kappa^{<\kappa}$. Also, we can't replace $\omega_2$ with $\omega_1$ in results of this section. Under CH there exists a unique $\omega_1$-saturated linear order of cardinality $\omega_1$ and as such, it is surely not rigid. But $\operatorname{Fn}(\omega_1,\mathcal{LO},\omega_1)$ forces that the generic order is $\omega_1$-saturated of cardinality $\omega_1$, for the same reasons that $\operatorname{Fn}(\omega,\mathcal{LO},\omega)$ forces the generic order to be $\omega$-saturated (i.e. dense, without endpoints). 
	
	\begin{thm} \label{rigid}
		Let $\mathbb{P}=\operatorname{Fn}(\omega_2,\mathcal{LO},\omega_1)$,
		where $\mathcal{LO}$ denotes the class of all linear orders. Let $(\omega_2,\le)$ be a generic order added by $\mathbb{P}$ over a countable, transitive model $\mathbb{V}$, satisfying CH. Denote by $\mathbb{V}[\le]$ the corresponding generic extension. Let $\mathbb{Q}\in \mathbb{V}[\le]$ be any forcing notion, such that $\mathbb{V}[\le] \models \text{ "$\mathbb{Q}$ is c.c.c." }$, and $H$ be a $\mathbb{Q}$-generic filter in $\mathbb{V}[\le]$. Then the linear order $(\omega_2,\le)$ is rigid in $\mathbb{V}[\le][H]$.
		
	\end{thm}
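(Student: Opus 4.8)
The plan is to argue by contradiction and, as the key organisational point, to run the whole argument over $\mathbb{V}$ with the two-step iteration $\fors$ rather than inside $\mathbb{V}[\le]$. In the name world the order $\le$ is not yet decided, so I recover the amalgamation freedom that drives the finite-support rigidity proofs of the previous subsection — freedom that is genuinely lost if one fixes $\le$ first and then lets $\mathbb{Q}$ be chosen with full knowledge of it. So suppose $(p_0,\dot q_0)\Vdash$ ``$\dot h$ is a non-identity automorphism of $(\omega_2,\dot\le)$''. Extending $p_0$ I may decide $\dot h(a)=b$ with $a<b$; since $\dot h$ is forced order-preserving, every $c$ with $a<c<b$ then satisfies $\dot h(c)>\dot h(a)=b>c$, so $\dot h$ is forced to move the entire interval $(a,b)$ strictly upward. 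As the generic order is $\omega_2$-dense this interval has size $\omega_2$, and $\dot h$ restricts to an order-isomorphism between the disjoint size-$\omega_2$ intervals $(a,b)$ and $(b,\dot h(b))$. It therefore suffices to rule out such an isomorphism, i.e.\ to show that the generic order is entangled in a way no c.c.c.\ forcing can undo. Recall that, under CH, $\mathbb{P}$ is $\omega_1$-closed and $\omega_2$-c.c.\ by the Proposition and Corollary above; hence it adds no reals, CH still holds in $\mathbb{V}[\le]$, and $\fors$ is $\omega_2$-c.c.

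Next I extract a large family. Working below $(p_0,\dot q_0)$ and repeatedly placing fresh ordinals into $(a,b)$, I build conditions $(p_\xi,\dot q_\xi)\le(p_0,\dot q_0)$ for $\xi<\omega_2$, together with pairwise distinct $c_\xi$ and values $\bar c_\xi$ such that $c_\xi,\bar c_\xi\in\dom p_\xi$, $p_\xi\Vdash c_\xi<\bar c_\xi$, and $(p_\xi,\dot q_\xi)\Vdash\dot h(c_\xi)=\bar c_\xi$. I then apply the $\Delta$-system lemma: under CH an $\omega_2$-sized family of countable sets has an $\omega_2$-sized $\Delta$-subsystem with a countable root $R$. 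This is exactly where the family must have size $\omega_2$ rather than $\omega_1$: the conditions $p_\xi$ are \emph{countable} linear orders, of which there are $2^{\aleph_0}=\aleph_1$ isomorphism types, so an $\omega_1$-family cannot be thinned to a homogeneous one — whereas pigeonholing an $\omega_2$-family into the $\le\omega_1$ possible isomorphism-types-over-$R$ (again CH) does leave an $\omega_2$-sized subfamily on which all the $p_\xi$ are isomorphic over $R$ by isomorphisms fixing $R$ pointwise and matching $c_\xi\leftrightarrow c_\eta$, $\bar c_\xi\leftrightarrow\bar c_\eta$.

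Now I amalgamate to force a reversal. Since $\mathcal{LO}$ has strong amalgamation, any two conditions $p_\xi,p_\eta$ of the homogeneous family agree on $R=\dom p_\xi\cap\dom p_\eta$ and can be merged into a single linear order; the $R$-isomorphism guarantees that the pairs $\{c_\xi,\bar c_\xi\}$ and $\{c_\eta,\bar c_\eta\}$ straddle the \emph{same} block of $R$, leaving me free to interleave them as $c_\xi<c_\eta<\bar c_\eta<\bar c_\xi$. Call the resulting condition $p_{\xi\eta}$; it forces $c_\xi<c_\eta$ while $\bar c_\xi>\bar c_\eta$. If some such reversing amalgam leaves $\dot q_\xi$ and $\dot q_\eta$ compatible in $\dot{\mathbb{Q}}$, then a common extension $(r,\dot s)\le(p_\xi,\dot q_\xi),(p_\eta,\dot q_\eta)$ forces simultaneously $\dot h(c_\xi)=\bar c_\xi$, $\dot h(c_\eta)=\bar c_\eta$, $c_\xi<c_\eta$ and $\bar c_\xi>\bar c_\eta$, contradicting order-preservation of $\dot h$. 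A compatible pair certainly exists, since $\fors$ is $\omega_2$-c.c.\ and the family has size $\omega_2$. The main obstacle is that this compatible pair need not come with a \emph{reversing} amalgam: $\dot q_\xi$ is only a $\mathbb{P}$-name, so a priori the compatibility of $\dot q_\xi$ and $\dot q_\eta$ might depend on how the pure-order points $c_\xi,\bar c_\xi,c_\eta,\bar c_\eta$ are interleaved, and the witness supplied by the chain condition may interleave them monotonically.

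To remove this dependence I disentangle the c.c.c.\ names from the order-theoretic interleaving, by the method of isomorphic conditions. Using the $\omega_1$-closure of $\mathbb{P}$ I first seal the relevant data: below any condition one can decide the (necessarily countable, by c.c.c.) maximal antichain of $\dot{\mathbb{Q}}$ witnessing compatibility of the $\dot q$'s, and one can arrange that each $\dot q_\xi$ is a $\mathbb{P}_{s_\xi}$-name for some countable $s_\xi\supseteq\dom p_\xi$, where $\mathbb{P}_{s_\xi}$ is the regular suborder of conditions supported on $s_\xi$. Re-running the $\Delta$-system and homogeneity step with the $s_\xi$ in place of $\dom p_\xi$, and now folding the isomorphism type of the name $\dot q_\xi$ over $R$ into the pigeonhole (again only $\le\omega_1$ types under CH), I obtain an $\omega_2$-sized subfamily on which the canonical $R$-isomorphism $\sigma_{\xi\eta}$ additionally carries $\dot q_\xi$ to $\dot q_\eta$. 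Then $\sigma_{\xi\eta}$ induces an automorphism of the relevant part of $\fors$ exchanging $(p_\xi,\dot q_\xi)$ and $(p_\eta,\dot q_\eta)$, and since the fresh points lie outside $R$ and outside each other's sealed supports, a symmetry argument shows that $\dot{\mathbb{Q}}$-compatibility is insensitive to a reversing re-interleaving of $c_\xi,\bar c_\xi,c_\eta,\bar c_\eta$. Hence the compatible pair furnished by the chain condition can be converted into a reversing amalgam without destroying $\dot{\mathbb{Q}}$-compatibility, and the contradiction of the previous paragraph goes through. As $\mathbb{Q}$ and $H$ were arbitrary, $(\omega_2,\le)$ is rigid in $\mathbb{V}[\le][H]$. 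I expect this last disentangling step — pinning each $\dot q_\xi$ down to countable support and matching names under $\sigma_{\xi\eta}$ — to be by far the most delicate part of the argument.
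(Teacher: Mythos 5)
Your overall frame agrees with the paper's: you work in $\mathbb{V}$ with the two-step iteration $\fors$, extract an interval on which $\dot h$ is fixed-point free, build an $\omega_2$-family of conditions deciding $\dot h(c_\xi)=\bar c_\xi$, thin by the $\Delta$-system lemma and CH to isomorphic extensions of a countable root $R$, and aim for a reversing amalgamation $c_\xi<c_\eta$, $\bar c_\eta<\bar c_\xi$. You also correctly identify the crux: the reversing amalgam of the $\mathbb{P}$-parts must not destroy compatibility in the $\dot{\mathbb{Q}}$-coordinate. But your proposed resolution of that crux has a genuine gap. First, the ``sealing'' of each $\dot q_\xi$ to a name with countable support is unjustified: deciding a $\mathbb{P}$-name for a member of $\dot{\mathbb{Q}}$ requires a maximal antichain of $\mathbb{P}$, which (since $\mathbb{P}$ is only $\omega_2$-c.c.) may have size $\omega_1$, so the natural supports $s_\xi$ have size $\omega_1$, not $\omega$, and the $\Delta$-system/pigeonhole counting under CH no longer goes through. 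Second, and more fundamentally, the map $\sigma_{\xi\eta}$ induces an automorphism of $\mathbb{P}$ but not of $\fors$: for that it would have to fix the name $\dot{\mathbb{Q}}$, and $\dot{\mathbb{Q}}$ is an \emph{arbitrary} name for a c.c.c.\ poset --- it may be defined from the generic order itself so as to break exactly this symmetry. Third, even granting matched names, $\sigma_{\xi\eta}$ carries the interleaving $c_\xi<c_\eta<\bar c_\eta<\bar c_\xi$ to its mirror image $c_\eta<c_\xi<\bar c_\xi<\bar c_\eta$, not to the monotone interleaving produced by the chain-condition witness, so it cannot transport a compatibility witness from the monotone amalgam to a reversing one.

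The paper avoids ever needing compatible second coordinates, via a closure trick (its Lemma 2) that your argument is missing: for every $p$ there is $p_c\le p$ such that $(p_c,\dot{\mathbb{Q}})\Vdash \overline h[p_c]=p_c$, i.e.\ one absorbs into the $\mathbb{P}$-condition the countable set $F(\alpha)$ of \emph{all} values of $\overline h(\alpha)$ decidable by \emph{any} extension in the $\dot{\mathbb{Q}}$-coordinate (countable since $\dot{\mathbb{Q}}$ is forced c.c.c.\ and $\mathbb{P}$ is $\sigma$-closed), and similarly arranges $(p_\alpha,\dot{\mathbb{Q}})\Vdash\overline h[R]=R$ using the $\omega_2$-c.c.\ of $\fors$. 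One then amalgamates only the $\mathbb{P}$-parts, using the specific amalgamation of Lemma 1 in which $\gamma\in p_\alpha\setminus R$ and $\eta\in p_\beta\setminus R$ are ordered $\gamma<\eta$ exactly when some $r\in R$ separates them, and placing no point of $R$ between $\alpha<\beta$. Now let an \emph{arbitrary} $(r,\dot q)$ below the amalgam decide $\overline h(\alpha)=h(\alpha)$, $\overline h(\beta)=h(\beta)$: invariance forces $h(\alpha)\in p_\alpha\setminus R$, $h(\beta)\in p_\beta\setminus R$ with no $R$-point between them, so the amalgamation formula automatically yields $h(\beta)<h(\alpha)$ while $\alpha<\beta$ --- the reversal is built into the order structure before any $\dot{\mathbb{Q}}$-information is committed. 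This quantification over all $\dot q$, rather than fixing names $\dot q_\xi$ in advance and trying to re-match them by symmetry, is the essential idea your proposal lacks, and without it (or a correct substitute) the argument does not close.
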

	
	We will use a simple Lemma assuring that we can amalgamate linear orders in a suitable way.
	
	\begin{lem}
		Let $(L_1,\le_1)$, $(L_2,\le_2)$ be any linear orders, $R=L_1\cap L_2$, \\
		and $(R,\le_1)=(R,\le_2)$. 
		There exists a linear order $\le$ on $L_1\cup L_2$, extending both $\le_1$ and $\le_2$ and satisfying
		$$\forall\; {l_1\in L_1\setminus R}\quad\forall \; {l_2\in L_2\setminus R}\quad {l_1<l_2} \iff \exists\; {r \in R}\;{l_1<_1r<_2l_2}.$$
	\end{lem}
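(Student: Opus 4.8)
The plan is to write down the unique candidate order explicitly and then verify that it is a linear order; existence is all that is required, and in fact the three demands (extending $\le_1$ and $\le_2$, the displayed biconditional, and totality) leave no freedom, so there is only one relation to test. Define $\le$ on $L_1\cup L_2$ by: for $x,y$ both in $L_1$ put $x\le y\iff x\le_1 y$; for $x,y$ both in $L_2$ put $x\le y\iff x\le_2 y$ (these agree on $R=L_1\cap L_2$ by hypothesis, so there is no clash); and for $x\in L_1\setminus R$, $y\in L_2\setminus R$ declare
$$x<y\iff \exists r\in R\ (x<_1 r<_2 y),\qquad y<x\iff \neg\,\exists r\in R\ (x<_1 r<_2 y).$$
Since $L_1\setminus R$ and $L_2\setminus R$ are disjoint and every element of $L_1\cup L_2$ lies in exactly one of $L_1\setminus R$, $L_2\setminus R$, $R$, this defines a total, irreflexive relation by construction, and the displayed biconditional together with the ``extends $\le_1,\le_2$'' clauses hold by definition.

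The one genuinely load-bearing observation is that the two betweenness conditions for a crossed pair are mutually exclusive: if both $x<_1 r<_2 y$ and $y<_2 r'<_1 x$ held for some $r,r'\in R$, then $r'<_1 r$ (from $r'<_1 x<_1 r$) while $r<_2 r'$ (from $r<_2 y<_2 r'$), contradicting that $\le_1$ and $\le_2$ restrict to the same order on $R$. Hence the complementary definition of $y<x$ above coincides with the symmetric rule $y<x\iff\exists r\in R\ (y<_2 r<_1 x)$ except exactly on pairs filling a common gap of $R$, where it forces the $L_2$-element below the $L_1$-element --- the only choice compatible with totality.

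The main work, and the only place needing care, is transitivity. I would prove $x<y<z\Rightarrow x<z$ by cases according to which of $A_1:=L_1\setminus R$, $A_2:=L_2\setminus R$, $R$ each of $x,y,z$ belongs to. When all three lie in a single $L_i$ the claim is just transitivity of $\le_i$, so only the mixed configurations matter; in each one chases a witness $r\in R$ through the chain, repeatedly using that $r<_1 s\iff r<_2 s$ for $r,s\in R$. For instance, if $x\in A_1$, $y\in A_2$, $z\in A_1$, then $x<y$ yields $r$ with $x<_1 r<_2 y$, while $y<z$ means $\neg\exists s\,(z<_1 s<_2 y)$; applying the latter to $s=r$ forces $r<_1 z$, whence $x<_1 r<_1 z$ and so $x<z$. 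The cases with $x\in A_2$, or with the tie-breaking clause active, are handled identically, using the mutual-exclusivity observation to rule out the forbidden witness. There are only finitely many configurations, each a short witness computation, and I expect no obstruction beyond bookkeeping.

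Optionally I would note that transitivity can instead be obtained for free by exhibiting an order-embedding of $(L_1\cup L_2,\le)$ into a single linear order: send each point to its strict lower set in $R$, order these initial segments by inclusion (a total order, since initial segments of a chain are comparable), and break ties by $\le_1$ and $\le_2$, placing $A_2$-points below $A_1$-points in a shared gap. This reproduces the same relation and makes the order axioms automatic, at the cost of setting up the tie-breaking carefully.
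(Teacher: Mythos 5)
Your proof is correct and takes essentially the same approach as the paper, whose entire proof is the one-line remark that the displayed formula is taken as the definition of the cross-order (with $l_2<l_1$ in the absence of a witness, exactly your tie-breaking clause). Your mutual-exclusivity observation and the case-chase for transitivity simply supply the routine verification that the paper leaves implicit.
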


	\begin{proof}
		We take the above formula as the definition.
		
	\end{proof}

	\begin{lem}
		Let $\mathbb{P}=\operatorname{Fn}(\omega_2,\K,\omega_1)$, 
		$\mathbb{P} \Vdash \text{"$\dot{\mathbb{Q}}$ is a c.c.c. forcing notion"},$
		and assume that
		$$\fors \Vdash \overline{h}:\omega_2 \rightarrow \omega_2 \text{ is a bijection}.$$ Then for every $p \in \mathbb{P}$ exists $p_c\le p$ with the property that $(p_c,\dot{\mathbb{Q}}) \Vdash \overline{h}[p_c]=p_c$.
	\end{lem}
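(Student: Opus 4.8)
The plan is to construct $p_c$ by a closing-off argument of length $\omega$, exploiting the two complementary features of the iteration: the tail $\dot{\mathbb{Q}}$ is forced to be c.c.c., while $\mathbb{P}=\operatorname{Fn}(\omega_2,\mathcal{K},\omega_1)$ is $\omega_1$-closed (take $\lambda=\omega_1$ in the closure Proposition above). The aim is a condition $p_c\le p$ whose universe $\dom(p_c)$ is forced by $(p_c,\dot{\mathbb{Q}})$ to be invariant under $\overline h$; since $\overline h$ is forced to be a bijection, it suffices to force both $\overline h[\dom(p_c)]\subseteq\dom(p_c)$ and $\overline h^{-1}[\dom(p_c)]\subseteq\dom(p_c)$, as these together give $\overline h[\dom(p_c)]=\dom(p_c)$.

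The single key sub-claim I would isolate is: for every $q\in\mathbb{P}$ and every $\alpha<\omega_2$ there are $q'\le q$ and a \emph{ground-model} countable set $C\subseteq\omega_2$ with $(q',\dot{\mathbb{Q}})\Vdash\overline h(\alpha)\in\check C$ (and symmetrically for $\overline h^{-1}(\alpha)$). To prove it I would first work below $q$ in $V^{\mathbb{P}}$: since $\dot{\mathbb{Q}}$ is c.c.c. there, a maximal antichain deciding the ordinal $\overline h(\alpha)$ is countable, so the values it decides form a countable set $\dot C$ with $\Vdash_{\dot{\mathbb{Q}}}\overline h(\alpha)\in\dot C$. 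Then I would use that $\mathbb{P}$, being $\omega_1$-closed, adds no new $\omega$-sequence of ordinals: fixing a $\mathbb{P}$-name for an enumeration of $\dot C$ and deciding its values one at a time along a countable descending chain (whose lower bound exists by closure) produces $q'\le q$ and a genuine $C\in V$ with $q'\Vdash\dot C=\check C$, whence $(q',\dot{\mathbb{Q}})\Vdash\overline h(\alpha)\in\check C$.

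With the sub-claim in hand the closing-off is routine. Set $p_0=p$ and $D_0=\dom(p_0)$. Given $p_n$ with countable universe $D_n=\dom(p_n)$, I process the countably many points $\alpha\in D_n$ one by one along an inner descending $\omega$-chain: at each step I apply the sub-claim to the current $\alpha$ (for both $\overline h$ and $\overline h^{-1}$) and then enlarge the universe of the current condition so that it contains the countable set $C$ just produced. Enlarging a condition to any prescribed larger countable universe is harmless and available for relational $\mathcal{K}$ with strong amalgamation (one may, as in the proof of the Proposition on $\operatorname{Fn}(\omega,\mathcal{K},\omega)$, read the extension off an ambient copy of the limit), and the inner lower bound exists because $\mathcal{K}_{\omega_1}$ is closed under increasing unions of countable length, so the union of the chain is again a condition, with universe the union of the universes. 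Calling the result $p_{n+1}$ and putting $D_{n+1}=\dom(p_{n+1})$, we get $(p_{n+1},\dot{\mathbb{Q}})\Vdash\overline h(\alpha),\overline h^{-1}(\alpha)\in\check D_{n+1}$ for every $\alpha\in D_n$. Finally I take $p_c=\bigcup_n p_n$, again a condition by closure under countable unions, with $\dom(p_c)=\bigcup_n D_n=:D$. Every $\alpha\in D$ lies in some $D_n$, so $(p_c,\dot{\mathbb{Q}})\Vdash\overline h(\alpha),\overline h^{-1}(\alpha)\in\check D$; hence $\overline h[\check D]\subseteq\check D$ and $\overline h^{-1}[\check D]\subseteq\check D$ are forced, so $\overline h[\check D]=\check D=\dom(p_c)$, i.e.\ $(p_c,\dot{\mathbb{Q}})\Vdash\overline h[p_c]=p_c$.

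The main point that makes the argument go through — and the place I would be most careful — is precisely the tension resolved by the sub-claim: c.c.c.\ of $\dot{\mathbb{Q}}$ confines each value $\overline h(\alpha)$ to countably many candidates, so the closing-off set stays countable and fits inside a legitimate condition of $\mathbb{P}$, while the $\omega_1$-closure of $\mathbb{P}$ is what lets me simultaneously capture those candidates in a ground-model set and form all the descending-chain lower bounds that keep the construction inside $\mathbb{P}$. Everything else, namely enlarging universes and taking unions, is bookkeeping justified by the standing assumptions on $\mathcal{K}$.
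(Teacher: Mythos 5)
Your proof is correct and follows essentially the same route as the paper's: a closing-off construction of length $\omega$ in which the c.c.c.\ of $\dot{\mathbb{Q}}$ confines each value $\overline{h}(\alpha)$ to a countable set of candidates decided along a maximal antichain, the $\sigma$-closure of $\mathbb{P}$ pulls that set into the ground model, and closure of $\mathcal{K}_{\omega_1}$ under countable increasing unions makes the union of the chain a condition --- the paper merely organizes the bookkeeping with a single descending chain and a partition $\{F_n\}_{n<\omega}$ of $\omega$ instead of your nested $\omega\times\omega$ scheme. One point where you are actually more careful than the paper: its proof only verifies $\overline{h}(\alpha)\in p_c$ for $\alpha\in p_c$, i.e.\ $\overline{h}[p_c]\subseteq p_c$, which for a countable set does not by itself yield the stated equality $\overline{h}[p_c]=p_c$, whereas your symmetric closing-off under $\overline{h}^{-1}$ (legitimate since $\overline{h}$ is forced to be a bijection of $\omega_2$) supplies exactly the missing inclusion --- and that two-sided invariance is genuinely needed later, when the proof of the rigidity theorem argues that no element of the invariant root $R$ lies between $h(\alpha)$ and $h(\beta)$.
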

	
	\begin{proof}
		Let $\{F_n\}_{n<\omega}$ be a partition of $\omega$ into infinite sets, such that \\
		$\forall{n<\omega}	\; n\le \min{F_n}$. We define a sequence of conditions $p_n \in \mathbb{P}$ by induction, starting with $p_0=p$. Enumerate $p_0=\{r_n|\;
		n \in F_0 \}$. Suppose $p_n$ is defined. Since $n \in F_k$ for some $k\le n$, also $r_n$ is defined. We may take a sequence of $\mathbb{P}$-names with the property
		
		$$p_n \Vdash \text{" $\{\dot{q}_{n+1}^k\}_{k<\omega}$ is a maximal antichain deciding $\overline{h}(r_n)$"}.$$
		
		Since $\mathbb{P}$ is $\sigma$-closed, we can find $p'_n\le p_n$ deciding all the names $\dot{q}_{n+1}^k$ for $k<\omega$. Therefore the set
		$A=\{\beta<\omega_2|\; \exists{k<\omega}\; (p'_n,\dot{q}_{n+1}^k) \Vdash \overline{h}(r_n)=\beta\}$
		is at most countable. Let $p_{n+1}=p'_n\cup A$ (with relations defined arbitrarily), and enumerate $p_{n+1}=\{r_k|\; k \in F_{n+1} \}.$ The inductive step is completed.\\
		Take $p_c=\bigcup_{n<\omega}{p_n}$. We will show that for any $\dot{q}$, with $p_c \Vdash \dot{q} \in \dot{\mathbb{Q}}$, and any $\alpha \in p_c$,
		$(p_c,\dot{q}) \Vdash \overline{h}(\alpha) \in p_c$. Indeed, in this situation there is some $n<\omega$ such that $\alpha \in p_n$. Therefore we can find $k<\omega$ with $\alpha =r_k$, $k\in F_n$. In the $k$-th inductive step we ensure that $(p_{k+1},\dot{q}) \Vdash \overline{h}(r_k) \in p_{k+1}$. It follows that $(p_c,\dot{q})
		\Vdash \overline{h}(r_k)\in p_c$.
	\end{proof}

	\begin{proof}[Proof of Theorem \ref{rigid}]
		Work in $\mathbb{V}$. Let $\dot{\le}$ be a $\mathbb{P}$-name for $\le$. Suppose that
		$\mathbb{P} \Vdash \text{ "$\dot{\mathbb{Q}}$ is c.c.c." }$, and
		$\mathbb{P} \ast \dot{\mathbb{Q}} \Vdash \text{"$\overline{h}:
			(\omega_2,\dot{\le})\rightarrow (\omega_2,\dot{\le)}$ is a non-identity isomorphism"}$. 
		
		\paragraph{Step 0}
		It can be easily verified, that if $\overline{h}$ was identity on a dense set, then it would be identity everywhere. Therefore there exist $\fors$-names $\overline{\delta}_0$, $\overline{\delta}_1$, such that
		
		$$\fors \Vdash \overline{\delta}_0\dot{<}\overline{\delta}_1, \,
		\forall{x \in (\overline{\delta}_0,\overline{\delta}_1)}\; \overline{h}(x)\neq x.$$ 
		
		Fix $(p,\dot{q}) \in \fors$ deciding $\overline{\delta}_0$ and $\overline{\delta}_1$, i.e. 
		$(p,\dot{q}) \Vdash \forall{i\in \{0,1\}}\; \overline{\delta}_i=\delta_i$, for some $\delta_i\in \omega_2$. Without loss of generality, we can assume that $\dot{q}$ is the greatest element of $\dot{\mathbb{Q}}$, so that
		
		$$ (p,\dot{\mathbb{Q}}) \Vdash \delta_0\dot{<}\delta_1, \,
		\forall{x \in (\delta_0,\delta_1)}\; \overline{h}(x)\neq x.$$

		\paragraph{Step 1}
		
		For $\alpha \in \omega_2 \setminus\{\delta_0,\delta_1\}$ we fix a condition $p_\alpha = (p_\alpha, \le_\alpha) \le p$, with 
		$$\delta_0 <_\alpha \alpha <_\alpha \delta_1.$$ Take a sequence of names satisfying
		$$p_\alpha \Vdash \text{"$\{\dot{q}_\alpha^n\}_{n<\omega}$ is a maximal antichain deciding $\overline{h}(\alpha)$"}.$$ Since $\mathbb{P}$ is $\sigma$-closed, we can assume that $p_\alpha$ decides all the names $\dot{q}_\alpha^n$, so the set
		$F(\alpha)=\{\beta<\omega_2|\; \exists{n<\omega}\; (p_\alpha,\dot{q}_\alpha^n) \Vdash \overline{h}(\alpha)=\beta \}$
		is countable. Note, that since $p_\alpha \le p$, $\alpha \notin F(\alpha)$. Finally, we can assume that $F(\alpha) \subseteq p_\alpha$, and, due to Lemma 2, that $(p_\alpha,\dot{\mathbb{Q}}) \Vdash \overline{h}[p_\alpha] = p_\alpha$.
		
		\paragraph{Step 2}
		Using $\Delta$-Lemma for countable sets, we can find $I \subseteq \omega_2$ of cardinality $\omega_2$, with the following
		conditions satisfied 
		\begin{itemize}
			\item $\forall{\alpha \in I}\;\forall{\beta \in I}\; \beta\neq \alpha \implies p_\alpha \cap p_\beta = R$, for some fixed countable $R \subseteq \omega_2$,
			\item $\forall{\alpha\in I}\;\forall{\beta \in I}\; \le_\alpha \restriction{R\times R}=\le_\beta \restriction{R \times R}$,
			\item extensions $R\subset R\cup \{\alpha\}$, for $\alpha \in I$, are pairwise isomorphic,
			\item $\forall{\alpha\in I}\; (p_\alpha,\dot{\mathbb{Q}})
			\Vdash \overline{h}[R]=R$.
		\end{itemize}
		All these conditions, perhaps excluding the last one, are direct consequences of CH. To justify the last claim, notice that $\fors$ is $\omega_2$-c.c. and so the set
		$$A=\{\beta < \omega_2 |\; \exists{(p,\dot{q}) \in \fors}\; \exists{r \in R}\; (p,\dot{q}) \Vdash \overline{h}(r)=\beta\}$$
		
		has cardinality at most $\omega_1$. We choose to $\{p_\alpha|\;
		\alpha \in I \}$ only conditions with $(p_\alpha \setminus R)\cap A=\emptyset$. Take $r \in R$. $(p_\alpha,\dot{\mathbb{Q}})
		\Vdash \overline{h}(r) \in p_\alpha \cap A \subseteq R$.
		
		\paragraph{Step 3}
		Take $\alpha, \beta \in I$, $\alpha \neq \beta$. Using the fact that the extensions $R\subseteq R \cup \{\alpha\}$
		and $R \subseteq R \cup \{\beta\}$ are isomorphic,
		we can extend $\le_\alpha = \le_\beta$ on $R$ to $(R\cup
		\{\alpha, \beta \}, \le_{\alpha,\beta})$ in such a way that there is no element from $R$ between $\alpha$ and $\beta$. We can of course decide that $\alpha <_{\alpha,\beta} \beta$. We now apply Lemma 1 to the pair of isomorphic extensions 
		\begin{center}	
			\begin{tikzcd}
				&   p_\alpha \cup \{\beta\} \ar[dd]
				&
				& \\
				R \cup \{\alpha,\beta\} \ar[ur] \ar[dr]
				&
				& 
				\\
				&    p_\beta \cup \{\alpha\}
				&
				&
			\end{tikzcd}
		\end{center}
		where the vertical arrow maps $\beta$ to $\alpha$.\\
		
		Extend $\le_{\alpha,\beta}$ to $p_\alpha \cup p_\beta$, ensuring that
		\begin{itemize}
			
			\item $\neg \exists{r \in R}\; \alpha <_{\alpha,\beta}r<_{\alpha, \beta} \beta$;
			\item $\forall{\gamma \in p_\alpha \setminus (R\cup \{\alpha\})}\; \forall{\eta \in p_\beta \setminus (R\cup\{\beta\})}\\ \gamma <_{\alpha,\beta} \eta
			\iff \exists{r \in R}\;
			\gamma <_{\alpha,\beta} r <_{\alpha,\beta }\eta .$
		\end{itemize}
		
		Take some condition $r\le p_{\alpha,\beta}$ and $\dot{q}$ deciding the values of $\overline{h}(\alpha)$ and $\overline{h}(\beta)$. Then
		$(r,\dot{q}) \Vdash \overline{h}(\alpha) = h(\alpha),\; \overline{h}(\beta)=h(\beta)$. Since there is no element from $R$ between $\alpha$ and $\beta$, and $R$ is $\overline{h}$ invariant, there is also no element from $R$ between $h(\alpha)$ and $h(\beta)$. But since $h(\alpha) \in p_\alpha \setminus \{\alpha\}$, and 
		$h(\beta) \in p_\beta \setminus \{\beta\}$, $h(\beta)<_{\alpha,\beta}h(\alpha)$. Therefore $(r,\dot{q}) \Vdash
		h(\alpha)>h(\beta)$, giving rise to a contradiction. This finishes the proof.\end{proof}
		
		The following argument, suggested by S. Shelah, shows that it is possible to have a separable rigid linear order, whose rigidity is c.c.c.-absolute. By Theorem 24 from \cite{as}, MA($\omega_1$) is consistent with the existence of a rigid set of reals of cardinality $\omega_1$.
		
		\begin{thm}
			Assume MA($\omega_1$), and let $A \subseteq \mathbb{R}$ be a rigid linear order of cardinality $\omega_1$. Then $A$ remains rigid in any generic extension by a c.c.c. forcing.
		\end{thm}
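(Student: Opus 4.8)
The plan is to argue by contradiction and use MA($\omega_1$) to pull a generic automorphism back into the ground model, contradicting the rigidity of $A$ there. Suppose some c.c.c. forcing notion $\mathbb{Q}$ together with some condition forces a non-identity automorphism of $A$; passing below that condition we may assume that $1_{\mathbb{Q}}$ forces ``$\dot{h}$ is a non-identity order-automorphism of $(A,\le)$''. The crucial observation is that, because $A\subseteq\mathbb{R}$ lies in the ground model, every value $h(a)$ is again an element of $A$, so all the relevant statements $\dot{h}(a)=b$ and $\dot{h}^{-1}(b)=a$ (for $a,b\in A$) involve only ground-model data and are decided by conditions of $\mathbb{Q}$.

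Working in the ground model, I would consider, for each $a\in A$, the set $D_a=\{q\in\mathbb{Q}:\exists b\in A\ \ q\Vdash \dot{h}(a)=b\}$, for each $b\in A$ the set $E_b=\{q\in\mathbb{Q}:\exists a\in A\ \ q\Vdash \dot{h}(a)=b\}$, and a single set $F=\{q\in\mathbb{Q}:q\Vdash \dot{h}(a)=b \text{ for some } a,b\in A \text{ with } a\neq b\}$. The sets $D_a$ and $E_b$ are dense by the standard fact that a name forced to take values in a ground-model set can always be decided (using that $\dot{h}$ is onto for $E_b$), and $F$ is dense because $1_\mathbb{Q}$ forces $\dot{h}\neq\mathrm{id}$. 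This is a family of $\omega_1$ dense subsets of a c.c.c. poset, so MA($\omega_1$) yields a filter $G\subseteq\mathbb{Q}$ meeting all of them. I would stress that MA($\omega_1$) is applied to $\mathbb{Q}$ directly, with no need to replace it by a suborder of size $\omega_1$, since Martin's Axiom places no bound on the cardinality of the poset.

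From $G$ I read off a map $h^\ast:A\to A$ by setting $h^\ast(a)=b$ whenever some $q\in G$ forces $\dot{h}(a)=b$; this is well defined because $G$ is a filter, so any two of its conditions deciding $\dot{h}(a)$ are compatible and therefore force the same value. The remaining verifications are routine consequences of $G$ being directed: given $a<a'$ in $A$, a common extension in $G$ of witnesses for $h^\ast(a)$ and $h^\ast(a')$ forces $\dot{h}(a)<\dot{h}(a')$, so $h^\ast$ is strictly order-preserving and hence injective; meeting each $E_b$ gives surjectivity; and meeting $F$ gives $h^\ast(a)\neq a$ for some $a$. Thus $h^\ast$ is a non-identity order-automorphism of $(A,\le)$ living in the ground model, contradicting the assumed rigidity of $A$.

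The genuinely delicate points, and where I would be most careful, are (i) the density of $F$, $D_a$, and $E_b$, where the key is that the non-triviality of $\dot{h}$ is forced outright once we pass below the initial condition, and (ii) confirming that order-preservation and surjectivity of $h^\ast$ transfer from the forced statements about $\dot{h}$ purely through the compatibility of the conditions in $G$. Note that the argument uses neither separability nor the real-line structure beyond the facts that $|A|=\omega_1$ and that $A$ and all its elements lie in the ground model; the hypothesis $A\subseteq\mathbb{R}$ enters only through the cited Theorem 24 of \cite{as}, which supplies such a rigid $A$ under MA($\omega_1$).
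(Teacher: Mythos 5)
Your proposal is correct and follows essentially the same route as the paper: both argue by contradiction and apply MA($\omega_1$) to the c.c.c.\ poset itself, using $\omega_1$ many dense sets (the paper uses maximal antichains deciding $\dot{f}(\gamma)$) to find a filter from which a ground-model non-identity automorphism of $A$ is read off, contradicting rigidity. If anything, your version is slightly more careful than the paper's, which leaves implicit the decisions needed for surjectivity (your $E_b$) and non-triviality (your $F$).
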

		
		\begin{proof}
			Without loss of generality we may assume that $A=(\omega_1,\le)$. Let $\mathbb{S}$ be any c.c.c. forcing, and suppose towards contradiction that 
			$$\mathbb{S} \Vdash "\dot{f}:(\omega_1,\le)\hookrightarrow(\omega_1,\le) \text{ is a non-identity isomorphism."}$$
			For all $\gamma<\omega_1$, let $A_\gamma \subseteq \mathbb{S}$ be some maximal antichain deciding $\dot{f}(\gamma)$. By Martin's Axiom there is a filter $H\subseteq \mathbb{S}$ intersecting all of the sets $A_\gamma$. Therefore $\dot{f}[H]$ is well defined, and is a non-trivial automorphism of $(\omega_1,\le)$, contrary to the fact that $A$ was rigid.
		\end{proof}

	\section{Linear orders with few automorphisms}
	
	T. Ohkuma proved in \cite{Ohk} that there exist $2^\mathfrak{c}$ pairwise non-isomorphic groups $(G,+) \le(\mathbb{R},+)$, with the property that $\aut{(G,\le)}\simeq (G,+)$, meaning that $G$ has no order-automorphisms other that translations. These groups all have cardinality $\mathfrak{c}$, however the authors of \cite{gghs} have shown that consistently there are uncountable groups of cardinality less than $\mathfrak{c}$ with this property. These are examples of separable, uncountable linear orders, with few, but more than one, automorphisms. We are going to provide one more construction in this spirit.

	\begin{thm} \label{jerusalem}
		It is consistent that there exists an $\omega_1$-dense real order type $(A,\le)$ with a non-identity automorphism $\phi$, such that $\aut{(A,\le)}=\{\phi^k|\; k\in \mathbb{Z} \}$. Moreover, $\phi$ satisfies $\phi(x)>x$ for all $x\in A$.	
	\end{thm}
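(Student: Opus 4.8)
The plan is to realize $(A,\le)$ as the generic object of a Cohen-like forcing that builds the order \emph{together with} the prescribed automorphism, and then to run a rigidity argument in the spirit of Theorems 3--5 showing that no automorphism survives beyond the powers of $\phi$. Concretely, I would fix in advance the underlying set $A=\omega_1\times\mathbb{Z}$ and the bijection $\phi(\alpha,n)=(\alpha,n+1)$, so that $\phi$ is fixed-point-free with orbits $\{\alpha\}\times\mathbb{Z}$; only the order is added generically. A condition $p$ is a finite linear order on a finite $\dom(p)\subseteq A$ which is \emph{$\phi$-coherent}: whenever $x,y,\phi x,\phi y$ all lie in $\dom(p)$ one has $x<_p y\iff \phi x<_p\phi y$, and $x<_p\phi x$ whenever both are present; conditions are ordered by reverse inclusion. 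The usual density sets make $\bigcup G$ a linear order on all of $A$ without endpoints, a Baumgartner-style density argument makes it $\omega_1$-dense, and coherence of conditions together with a density argument forcing each relevant pair of $\phi$-translates to be decided consistently makes $\phi$ a genuine order-automorphism with $\phi(x)>x$.

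Two preliminary properties must then be established. First, that the generic order is a \emph{real order type}, i.e.\ embeds into $\mathbb{R}$: I would either carry a distinguished countable set along as a forced order-dense suborder, or verify that a c.c.c.-generic $\omega_1$-dense order with a countable dense set is separable and hence embeds into $\mathbb{R}$. Secondly, that the forcing is c.c.c.\ (indeed Knaster), by an amalgamation argument for finite $\phi$-coherent orders analogous to the SAP arguments of Section~1; this is what permits the $\Delta$-system argument below. Since the whole construction is by c.c.c.\ forcing, the conclusion holds in the generic extension, which yields the claimed consistency; I would remark that the structure need not survive \emph{further} c.c.c.\ forcing (compatibly with the Baumgartner phenomenon recalled in Section~2), which is exactly why only consistency is asserted.

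The heart of the proof is to show $\aut(A,\le)=\{\phi^k:k\in\mathbb{Z}\}$. Suppose $p\Vdash$ ``$\dot h$ is an order-automorphism of $(A,\dot\le)$''. As in Step~0 of Theorem~5, an automorphism agreeing with a fixed map on a dense set agrees with it everywhere; applying this to $\dot h\circ\phi^{-k}$, it suffices to derive a contradiction from the assumption that $\dot h$ disagrees with every $\phi^k$ off a non-dense set. Fixing, for uncountably many $s$, conditions $p_s\le p$ with $p_s\Vdash \dot h(s)=\bar s$, I would thin to a $\Delta$-system with root $R$ whose order and $\phi$-structure is fixed, ensure the $s$ lie in pairwise distinct $\phi$-orbits, and thin so that the one-point extensions $R\subseteq R\cup\{s\}$ and $R\subseteq R\cup\{\bar s\}$ are pairwise isomorphic. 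For two indices $s\neq t$ I would then amalgamate $p_s$ and $p_t$ into a single $\phi$-coherent condition $q$ forcing $s<t$ but $\bar s>\bar t$, exactly as in Theorem~4, contradicting order-preservation of $\dot h$.

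The main obstacle, and where this departs from Theorem~4, is that the amalgamation must stay \emph{$\phi$-coherent}: deciding the order of $s$ relative to $t$ simultaneously decides the order of every pair of their $\phi$-translates, so one no longer has free rein to place $s,t,\bar s,\bar t$ and their witnesses. The key point to verify is that, precisely because $\bar s,\bar t$ do not lie on a common $\phi$-shift of $s,t$, the finitely many orbits occurring in $p_s\cup p_t$ can be thinned to be mutually independent, so that the relative ``phase'' of two distinct orbits may be chosen freely inside a single fundamental period and then extended $\phi$-periodically with no coherence conflict; this is what lets $q$ force $s<t$ together with $\bar s>\bar t$. The one configuration in which the order of $(s,t)$ and of $(\bar s,\bar t)$ is forced to agree is exactly $\bar s=\phi^k s,\ \bar t=\phi^k t$ with a \emph{common} $k$, and a common $k$ holding on an uncountable (hence dense) set returns us to the case $\dot h=\phi^k$. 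Carrying out this independence bookkeeping for the $\Delta$-system — checking that no two of the relevant orbits are forced to interact — is the technical crux; once it is done the theorem follows.
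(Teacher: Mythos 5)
Your overall architecture (force the order together with the automorphism, prove a Knaster/amalgamation lemma, run a $\Delta$-system rigidity argument whose only escape clause is ``$\bar s$ and $\bar t$ are common $\phi$-shifts of $s$ and $t$'') is the same as the paper's, but there are two genuine gaps. First, your condition definition is broken as stated. You impose coherence only when $x,y,\phi x,\phi y$ all lie in $\dom(p)$, plus $x<_p\phi x$ when both are present. This admits conditions that can never be extended: take $\dom(p)=\{x,\phi^2x\}$ with $\phi^2x<_p x$. No local constraint applies, yet any extension whose domain contains $\phi x$ must satisfy $x<\phi x<\phi^2x$, so no such extension exists, and the density sets needed to make the generic order total and $\phi$ an automorphism are not dense below $p$. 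To repair this you must require coherence under all powers $\phi^m$ (in particular $x<_p\phi^m x$ for $m\geq 1$ whenever both points are present) and then actually \emph{prove} that every such finite ``globally coherent'' order extends — this is not bookkeeping but the analogue of the paper's Lemma 3, proved there by embedding into $\mathbb{Q}$ and perturbing $h^{-1}$ along $\phi$-invariant systems of intervals. The paper sidesteps your difficulty by a different design: the partial automorphism $\phi_p$ is part of the condition (a finite increasing map with $x<_p\phi_p(x)$), so coherence is automatic, and the ordinal-proximity clauses $\phi_p(x)<_{ord}x+\omega$ and $\phi_p^{-1}(x)<_{ord}x+\omega$ are what make the $\Delta$-system root closed under each $\phi_\alpha$, yielding Knaster. (Your fixed-$\phi$ variant would, to be fair, make orbit membership absolute and simplify that closure step — but only if the extension lemma were established.)

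Second, and more seriously, your endgame has a hole that the paper fills with an idea absent from your proposal. The $\Delta$-system/amalgamation dichotomy, like the paper's Lemma 5, yields only: every uncountable set contains \emph{some} $\alpha$ with $h(\alpha)=\phi^{k}(\alpha)$ for \emph{some} $k$ depending on $\alpha$. Your closing step ``a common $k$ holding on an uncountable (hence dense) set returns us to the case $\dot h=\phi^k$'' fails on both counts: an uncountable subset of an $\omega_1$-dense order need not be dense (it can sit inside a single interval), and applying the lemma interval-by-interval gives only density of $\bigcup_k\{x:\,h(x)=\phi^k(x)\}$ with $k$ varying from point to point, which does not reduce to ``$\dot h$ agrees with one $\phi^k$ on a dense set.'' The paper bridges exactly this gap analytically: it replaces the generic order by a copy $A\subseteq\mathbb{R}$, extends $h$ and $\phi$ to continuous increasing bijections of $\mathbb{R}$, notes that each $F_k=\{x\in\mathbb{R}:\,h(x)=\phi^k(x)\}$ is closed while $\bigcup_k F_k$ is dense (Lemma 5 applied inside each interval), and then a convergent-sequence case analysis on the indices $k_n$ (splitting into $k_n>k$, $k_n<k$, $k_n=k$) shows that any nonempty $F_k$ must be all of $\mathbb{R}$. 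Without this step, or some substitute upgrading ``densely, some power of $\phi$'' to ``everywhere, one power of $\phi$,'' your argument does not reach $\aut(A,\le)=\{\phi^k:\,k\in\mathbb{Z}\}$.
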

	
	Let $<_{ord}$ denote the usual order on $\omega_1$. The promised modification of $\operatorname{Fn}(\omega_1,\mathcal{LO},\omega)$ is
	the poset $\mathbb{P}$ consisting of triples $p=(p,\le_p,\phi_p)$ satisfying
	\begin{enumerate}
		\item $\le_p$ is a linear ordering of $p\in [\omega_1]^{<\omega}$,
		\item $\phi_p$ is an increasing bijection between two subsets of $p$,
		\item $\forall{x\in \dom{p}}\; x<_p\phi_p(x)$,
		\item $\forall{x\in \dom{p}}\; \phi(x)<_{ord}x+\omega$, with respect to the ordinal addition on $\omega_1$,
		\item $\forall{x\in \rg{p}}\; \phi^{-1}(x)<_{ord}x+\omega$, with respect to the ordinal addition on $\omega_1$.
	\end{enumerate}
	
	We denote by $(\omega_1,\le)$ the ordering added by $\mathbb{P}$, and by $\phi$ the corresponding automorphism. Before proceeding with the main proof we will see that it is possible to amalgamate finite linear orders together with partial automorphisms in a desired way. It will be convenient to denote by $\pa{(L,\le)}$ the set of finite, partial automorphisms of a linear order $(L,\le)$.
	
	\begin{lem}
		Let $(L_1,\le_1)$, $(L_2,\le_2)$ and $(R,\le_R)=(L_1,\le_1)\cap (L_2,\le_2)$ be finite linear orders. Fix partial automorphisms $\phi_1 \in \pa{(L_1,\le_1)}$, $\phi_2 \in \pa{(L_2,\le_2)}$. We assume that $(L_1,\phi_1)$ and $(L_2,\phi_2)$ are isomorphic extensions of $R$, i.e. there exists an isomorphism $h:(L_1,\le_1) \rightarrow (L_2,\le_2)$ that makes the diagram commutative.
		\begin{center}
			\begin{tikzcd}[column sep=large]
				& (L_1,\le_1) \ar[dd, "h"]  \ar[r, "\phi_1"] & (L_1,\le_1) \ar[dd, "h"]
				&
				& \\
				(R,\le_R) \ar[ur] \ar[dr]
				&
				&
				\\
				& (L_2,\le_2) \ar[r, "\phi_2"] & (L_2,\le_2)
				&
				&
			\end{tikzcd}
		\end{center}
		Take $a,b \in L_1\setminus R$ lying in different orbits of $\phi_1$. There exists a linear order $\le_c$ on $L_1 \cup L_2$ extending $\le_1$ and $\le_2$, and such that $\phi_1\cup \phi_2 \in \pa{(L_1\cup L_2,\le_c)}$, and moreover $a<_c h(a)$, and $h(b)<_cb$.
	\end{lem}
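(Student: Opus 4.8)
The plan is to build $\le_c$ by interleaving $L_1\setminus R$ with its isomorphic copy $h[L_1\setminus R]=L_2\setminus R$, placing each $x\in L_1\setminus R$ next to its twin $h(x)$ and recording in a sign $\varepsilon(x)\in\{+,-\}$ which of the two comes first. Formally, for $p\in L_1\cup L_2$ set $\bar p=p$ if $p\in L_1$ and $\bar p=h^{-1}(p)$ if $p\in L_2\setminus R$, so that $\bar p\in L_1$ is the common ``$L_1$-position'' of $p$ (note $\bar r=r$ for $r\in R$). Define $p<_c q$ to mean either $\bar p<_1\bar q$, or else $\bar p=\bar q$ with $\{p,q\}=\{x,h(x)\}$ for $x=\bar p\in L_1\setminus R$ and the tie broken by $\varepsilon(x)$ (say $x<_c h(x)$ iff $\varepsilon(x)=+$). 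This is a linear order: it is the lexicographic order with primary key $p\mapsto\bar p$ (ordered by $\le_1$) and a two-element secondary key. One checks at once that $\le_c$ restricts to $\le_1$ on $L_1$ and to $\le_2$ on $L_2$; here one uses that $h$ is a $\le_1$-to-$\le_2$ isomorphism fixing $R$, whence $p<_2 q\iff\bar p<_1\bar q$ for $p,q\in L_2$.

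The heart of the argument is to show that $\phi:=\phi_1\cup\phi_2$ (which we take to be a well-defined partial injection, as guaranteed by the commuting diagram) preserves $\le_c$ exactly when $\varepsilon$ is constant along the orbits of $\phi_1$. The key computation is the identity $\overline{\phi(p)}=\phi_1(\bar p)$ for every $p\in\dom\phi$: if $p\in L_1$ this is immediate, and if $p\in L_2\setminus R$ then, using $\phi_2=h\circ\phi_1\circ h^{-1}$ read off the diagram, $\phi(p)=h(\phi_1(\bar p))$, whose $L_1$-position is again $\phi_1(\bar p)$ whether or not $\phi_1(\bar p)$ lands in $R$. Since $\phi_1$ is an automorphism of $(L_1,\le_1)$, the primary-key comparison is therefore always preserved.

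For the secondary key, suppose $\bar p=\bar q$, i.e.\ $\{p,q\}=\{x,h(x)\}$ with $x\in(L_1\setminus R)\cap\dom\phi_1$. One first checks $\phi_1(x)\notin R$: otherwise $\phi(x)=\phi_1(x)=h(\phi_1(x))=\phi(h(x))$ while $x\neq h(x)$, contradicting injectivity of $\phi$. Hence $\{\phi(p),\phi(q)\}=\{\phi_1(x),h(\phi_1(x))\}$ is again a twin pair, and the tie is preserved precisely when $\varepsilon(x)=\varepsilon(\phi_1(x))$. Thus $\phi$ is a partial automorphism of $(L_1\cup L_2,\le_c)$ if and only if $\varepsilon$ is invariant along $\phi_1$-orbits. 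The same ``twin pair to twin pair'' fact (applied to $\phi^{-1}$ as well) shows that every $\phi_1$-orbit meeting $L_1\setminus R$ stays inside $L_1\setminus R$, so this invariance condition only ever relates points of $L_1\setminus R$, where $\varepsilon$ is defined.

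Finally, since $a$ and $b$ lie in \emph{different} $\phi_1$-orbits, I set $\varepsilon\equiv+$ on the orbit of $a$, $\varepsilon\equiv-$ on the orbit of $b$, and assign $\varepsilon$ arbitrarily on the remaining orbits; there is no conflict. By the equivalence just proved $\phi_1\cup\phi_2\in\pa{(L_1\cup L_2,\le_c)}$, and by the choice of signs $a<_c h(a)$ and $h(b)<_c b$, as required. I expect the main obstacle to be precisely this automorphism verification, and in particular establishing the position identity $\overline{\phi(p)}=\phi_1(\bar p)$ and the twin-pair-preservation claim cleanly across the cases where $\phi_1(\bar p)$ does or does not fall into $R$. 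The different-orbit hypothesis is used in an essential way: on a single orbit the forced orbit-invariance of $\varepsilon$ would make $a<_c h(a)$ and $h(b)<_c b$ point in the same direction, so both conditions could not be met simultaneously.
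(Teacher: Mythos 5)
Correct, and essentially the paper's own argument in discrete dress: the paper embeds $L_1$ into $\mathbb{Q}$ and builds $f$ as a ``slight distortion of $h^{-1}$'', choosing $f(h(x_k))$ inside a $\phi_1$-equivariant system of intervals $I_k\setminus\{x_k\}$ around the points $x_k$ of $L_1$, which is exactly your lexicographic order by the $L_1$-position $\bar p$ with a side-choice $\varepsilon$ constant along $\phi_1$-orbits, the different orbits of $a$ and $b$ permitting opposite sides (your version has the added merit of making the orbit-invariance condition an explicit ``if and only if''). The only slightly inaccurate point is your parenthetical claim that the commuting diagram alone makes $\phi_1\cup\phi_2$ a well-defined injection --- for that one needs $\phi_1[R\cap\dom\phi_1]\subseteq R$ and $\phi_1^{-1}[R\cap\rg\phi_1]\subseteq R$, precisely what your ``twin pair'' computation turns on --- but this hypothesis is implicit in the lemma (its conclusion fails without it), is equally tacit in the paper's proof, and is explicitly arranged in the paper's applications via the $\Delta$-system refinements in Proposition 4 and Lemma 5.
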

	
	\begin{proof}
		We can assume that $R \subseteq L_1 \subseteq \mathbb{Q}$, and the usual ordering of $(\mathbb{Q},\le)$ extends $\le_1$. We look for an increasing function $f:(L_2,\le_2)\rightarrow (\mathbb{Q},\le)$ such that $f \restriction R = \operatorname{id}_R$, $f[L_2\setminus R] \cap (L_1\setminus R)=\emptyset$, and
		$$f\circ h(a)>a,$$
		$$f\circ h(b)<b.$$
		Indeed, having $f$ as above we will define
		$$x<_cy \iff x<f(y),$$
		for $x \in L_1$ and $y \in L_2$.\\
		
		It can be seen that the only reason why we can't take $f=h^{-1}$ is the disjointness requirement. So we should expect that $f$ will be just a slight distortion of $h^{-1}$. We must also ensure that $\phi_1 \cup \phi_2$ will be order-preserving.\\
		Let $\{x_1,\ldots,x_n \}$ be a $\le_1$-increasing enumeration of $L_1$. For $k=1,\ldots,n$ choose an open interval $I_k$ around $x_k$ in such a way that all intervals obtained this way are pairwise disjoint, and for $l\neq k=1,\ldots,n$ if $x_l=\phi_1^m(x_k)$, then $ \overline{\phi}_1^m[I_l]=I_k$, where $\overline{\phi}_1:(\mathbb{Q},\le)\rightarrow (\mathbb{Q},\le)$ is an extension of $\phi_1$.\\
		For each $k$ we choose $f(h(x_k))\in I_k \setminus \{x_k \}$, so that $\phi_1^m(f\circ h(x_k))=f\circ h \circ \phi_1^m(x_k)$, for $m\in \mathbb{Z}$, whenever this expression makes sense. We also ensure inequalities $f \circ h(a)>a$ and $f\circ h(b)<b$.
	\end{proof}
	
	\begin{prop}
		$\mathbb{P}$ satisfies the Knaster condition.
	\end{prop}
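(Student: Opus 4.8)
The plan is to verify the Knaster condition directly: given an uncountable family $\{p_\xi : \xi < \omega_1\} \subseteq \mathbb{P}$, I will extract an uncountable subfamily that is pairwise compatible, where the compatibility of two conditions is witnessed by an amalgam supplied by Lemma 3. The whole argument hinges on one structural observation about how the partial automorphisms interact with the ordinal order on $\omega_1$, and once that is in place the combinatorics is a routine $\Delta$-system-plus-pigeonhole thinning.

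First I would record the following consequence of clauses (4) and (5). For a limit ordinal $\lambda$ (or $\lambda=0$) call $B_\lambda=[\lambda,\lambda+\omega)$ the $\lambda$-th \emph{block}; these sets partition $\omega_1$. If $x\in\dom{p}$ and $x\in B_\lambda$, then clause (4) gives $\phi_p(x)<_{ord}x+\omega=\lambda+\omega$, while clause (5) applied to $y=\phi_p(x)\in\rg{p}$ gives $x=\phi_p^{-1}(y)<_{ord}\phi_p(x)+\omega$. The first inequality rules out $\phi_p(x)$ lying in a block above $B_\lambda$, and the second rules out a block below it, so $\phi_p(x)\in B_\lambda$. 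Hence every $\phi_p$-orbit is contained in a single block. This is exactly the fact that makes the amalgamation of automorphisms possible, and it is why the ordinal bounds (4),(5) were imposed.

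Next comes the thinning. I would first apply the $\Delta$-system lemma to the finite sets of blocks $\{\lambda : p_\xi\cap B_\lambda\neq\emptyset\}$, obtaining an uncountable subfamily whose block-supports form a $\Delta$-system with root a fixed finite set of blocks $\Lambda$. For each of the finitely many $\lambda\in\Lambda$ the sets $p_\xi\cap B_\lambda$ range over the countably many finite subsets of the countable set $B_\lambda$, so a further pigeonhole lets me assume $p_\xi\cap B_\lambda$ is a fixed finite set $R_\lambda$ for all $\xi$. Put $R=\bigcup_{\lambda\in\Lambda}R_\lambda$. Then $p_\xi\cap p_\eta=R$ for $\xi\neq\eta$, and, crucially, $R$ is closed under $\phi_{p_\xi}$ and its inverse for every $\xi$: any orbit meeting $R$ lies in one block, and if that block is in $\Lambda$ the whole orbit sits inside $R_\lambda\subseteq R$, while otherwise it is disjoint from $R$. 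Finally, since $R$ is a fixed finite set there are only finitely many possibilities for $\le_{p_\xi}\restriction R$ and for $\phi_{p_\xi}\restriction R$, and for each cardinality only finitely many isomorphism types of the extension $(p_\xi,\le_{p_\xi},\phi_{p_\xi})$ over $R$; pigeonholing once more I may assume all conditions agree on a common $(R,\le_R,\phi_R)$ and are pairwise isomorphic over $R$ via the identity on $R$.

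It remains to show any two conditions $p=p_\xi$, $q=p_\eta$ in the resulting uncountable subfamily are compatible. Because $R$ is orbit-closed and $p,q$ agree on $R$, the union $\phi_p\cup\phi_q$ is a genuine injective partial function on $p\cup q$, and $p,q$ are isomorphic extensions of $R$ with $h=\operatorname{id}_R$ commuting with the two partial automorphisms. I then invoke Lemma 3 to obtain a linear order $\le_c$ on $p\cup q$ extending both $\le_p$ and $\le_q$ and making $\phi_p\cup\phi_q$ a partial automorphism of $(p\cup q,\le_c)$. The triple $(p\cup q,\le_c,\phi_p\cup\phi_q)$ is then a common extension: clause (1) is given by Lemma 3, clause (2) is the well-definedness just discussed together with the monotonicity from Lemma 3, and clauses (3),(4),(5) are inherited pair-by-pair from $p$ and $q$, since $\le_c$ extends both orders and the ordinal bounds concern only pairs already lying in $\dom{\phi_p}$ or $\dom{\phi_q}$. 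The one place the argument could break is the well-definedness of $\phi_p\cup\phi_q$, and this is precisely what the orbit-closedness of $R$ secures; that is the main obstacle, and the block observation of the second paragraph is what overcomes it.
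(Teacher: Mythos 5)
Your proof is correct and takes essentially the same route as the paper: a $\Delta$-system plus pigeonhole thinning in which clauses (4) and (5) are used precisely to make the root closed under the partial automorphisms (your block decomposition $B_\lambda=[\lambda,\lambda+\omega)$ is an explicit rendering of the paper's terse ``we use 4.\ and 5.''), after which uniformized conditions are amalgamated. The only pedantic caveat is that Lemma 3 as stated requires two elements of $p\setminus R$ lying in different orbits, so in the degenerate cases where $p\setminus R$ is empty or a single orbit you should note that compatibility is trivial, or that the proof of Lemma 3 goes through with the side conditions on $a,b$ dropped.
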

	
	\begin{proof}
		Let $\{p_\alpha=(p_\alpha,\le_\alpha,\phi_\alpha)|\; \alpha<\omega_1 \} \subseteq \mathbb{P}$. We choose a $\Delta$-system
		$\{p_\alpha|\; \alpha\in S \}$, with some additional properties:
		\begin{itemize}
			\item $\forall{\alpha \in S}\; \forall{\beta \in S}\; \alpha \neq \beta \implies (p_\alpha,\le_\alpha)\cap (p_\beta,\le_\beta)=(R,\le_R)$, for some fixed ordering $\le_R$ of $R$,
			\item $\phi_\alpha[R]\subseteq R$,
			\item $\phi^{-1}_\alpha[R]\subseteq R$.
		\end{itemize}
		
		For ensuring the last two properties we use 4. and 5. from the definition of $\mathbb{P}$. To obtain an uncountable set of pairwise comparable conditions, we now only have to trim $\{p_\alpha|\; \alpha \in S \}$, so that $\phi_\alpha \restriction R$ does not depend on $\alpha$, and this is clearly possible. 
	\end{proof}

	\begin{lem}
		For every $\alpha_0 \in \omega_1$, the orbit of $\alpha_0$ under $\phi$ is cofinal and coinitial in $(\omega_1,\le)$
	\end{lem}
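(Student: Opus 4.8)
The statement concerns the generic objects $(\omega_1,\le)$ and $\phi$, so I would read it as a forcing statement and prove it by a density argument. Fix $\alpha_0,\beta\in\omega_1$ in the ground model. It suffices to show that the set of conditions $q$ for which some forward iterate $\phi_q^k(\alpha_0)$ is defined and satisfies $\beta<_q\phi_q^k(\alpha_0)$ is dense below every condition, together with the symmetric statement for backward iterates; genericity then delivers, for every $\beta$, an orbit element $\le$-above $\beta$ and one $\le$-below it, which is exactly cofinality and coinitiality. I would treat cofinality in detail, the coinitial case being entirely symmetric after exchanging $\phi$ with $\phi^{-1}$: property 3 says $\phi^{-1}$ strictly decreases $\le$, and property 5 plays the role property 4 plays below.

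Given $p=(p,\le_p,\phi_p)$, I first enlarge it so that $\alpha_0,\beta\in p$, which is trivially dense. Let $z$ be the top of the forward $\phi_p$-orbit of $\alpha_0$, i.e.\ the last $\phi_p^j(\alpha_0)$ still lying outside $\dom\phi_p$; this exists because $\phi_p(x)>_p x$ forbids cycles and $p$ is finite. If $\beta<_p z$ we are done, so assume $z\le_p\beta$. The plan is to push the orbit upward one step at a time. To adjoin $\phi(z)=w$ for a fresh ordinal $w$, order-preservation of $\phi_p\cup\{(z,w)\}$ forces $w$ to lie $\le$-below $C:=\min\nolimits_{\le}\{\phi_p(x):x\in\dom\phi_p,\ x>_p z\}$, and $\le$-above $z$ and above $\phi_p[\{x:x<_p z\}]$; here $C=+\infty$ precisely when no domain element is $\le$-above $z$, the \emph{blockers}. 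If there is no blocker I place $w$ at the very top, above $\beta$, and finish in one step.

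The key point, and what I expect to be the main obstacle, is termination when blockers are present. Let $x_0$ be the (unique, by injectivity of $\phi_p$) blocker realizing the minimum $C=\phi_p(x_0)$; since $x_0<_p\phi_p(x_0)=C$ by property 3, and every lower constraint on $w$ is $\le$-below $C$, there is a free slot strictly between $x_0$ and $C$. Placing $w$ there makes $x_0$ cease to be $\le$-above the new top $w$, so $x_0$ is no longer a blocker, while the only new domain element $z$ sits $\le$-below $w$ and so is no blocker either; hence the blocker count strictly decreases and no new blocker appears. As this count is a nonnegative integer, after finitely many single-step extensions the current top has no blocker above it, and a final step places an orbit element $\le$-above $\beta$. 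Inserting $w$ is throughout a finite operation on the linear order, so no density of $\le$ is needed.

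It remains to respect the ordinal constraints. Defining $\phi(z)=w$ requires $w<_{ord}z+\omega$ by 4 and, since $w$ then enters $\rg$, also $z<_{ord}w+\omega$ by 5; together these say that $w$ must lie in the same half-open $\omega$-block as $z$, of which infinitely many ordinals are fresh. Because the ordinal value of $w$ is chosen independently of its prescribed $\le$-position, such a $w$ is always available, so every condition constructed above is legitimate. The delicate verification is precisely the one in the previous paragraph — that placing $w$ just below the minimal blocker image removes a blocker without creating one — while the ordinal bookkeeping and the coinitial case are routine.
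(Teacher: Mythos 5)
Your proposal is correct, and its overall frame agrees with the paper's: both read the lemma as a density statement about sets of conditions forcing some iterate $\phi^k(\alpha_0)$ above a given $\beta$ (and symmetrically below), with genericity over the ground-model family $\{E_\beta \mid \beta<\omega_1\}$ finishing the argument. The density verification itself, however, is genuinely different. The paper uses a global trick: embed the finite condition $(p,\le_p)$ into the algebraic numbers $A$, extend $\phi_p$ to a total increasing map $\overline{\phi}:A\to A$ with $\overline{\phi}(a)>a+\epsilon$ for a fixed rational $\epsilon>0$, observe that every $\overline{\phi}$-orbit is then automatically cofinal and coinitial in $A$, and cut out a suitable finite fragment; this disposes of what you call blockers all at once and needs no termination argument. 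Your verification is instead a step-by-step finite amalgamation: adjoin one new orbit point per step, placed just below the minimal image $C=\phi_p(x_0)$ of the least blocker, with the blocker count as a strictly decreasing nonnegative integer. I checked the key step and it is sound: all lower constraints ($z$ itself via property 3, and $\phi_p(x)$ for $x<_p z$) sit strictly $\le$-below $C$ by monotonicity of $\phi_p$ and $x_0<_p C$, so the insertion is legal, removes $x_0$ (indeed every blocker $<_p C$) from the blocker set, and adds none, since the new domain element $z$ lies below the new top. Your approach buys two things the paper leaves implicit: an explicit check that each intermediate extension is a legitimate condition, and the ordinal bookkeeping — that properties 4 and 5 jointly confine $w$ to the same $\omega$-block $[\lambda,\lambda+\omega)$ as $z$, where fresh ordinals abound and the ordinal value of $w$ is decoupled from its $\le$-position; the paper's "extend $p$ accordingly" glosses over exactly this. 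What it costs is length and the blocker-termination analysis, which the paper's uniform-displacement extension renders unnecessary. Your splitting of cofinality and coinitiality into two symmetric dense families, versus the paper's single $E_\beta$ demanding both $\beta<_p\phi_p^k(\alpha_0)$ and $\phi_p^{-k}(\alpha_0)<_p\beta$ simultaneously, is an immaterial difference.
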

	\begin{proof}
		It is easy to see that the required family of dense sets is 
		$$E_\beta=\{p=(p,\le_p,\phi_p) \in \mathbb{P}|\; \{\alpha_0,\beta\} \subseteq p, \; \exists{k\ge 0}\; \beta<_p \phi_p^k(\alpha_0),\; \phi_p^{-k}(\alpha_0)<_p \beta  \},$$	
		for $\beta \in \omega_1$.
		
		In order to check that $E_\beta$ is dense, fix some condition $p=(p,\le_p,\phi_p) \in \mathbb{P}$ and $\beta < \omega_1$. We can assume that $\{\alpha_0,\beta\} \subseteq p$. In order to extend $p$ so that it belongs to $E_\beta$, we embed $(p,\le_p)$ into the set of algebraic numbers $A$. Now we can extend $\phi_p$ to an increasing function $\overline{\phi}:A \rightarrow A$, such that for some rational $\epsilon>0$ $\forall{a \in A}\; \overline{\phi}(a)>a+\epsilon$. It is clear that the orbit of $\alpha_0$ under $\overline{\phi}$ is both cofinal and coinitial in $A$. Finally we just cut out a suitable finite fragment of $\overline{\phi}$, and extend $p$ accordingly.
	\end{proof}
	
	\begin{lem}
		For each isomorphism $h:(\omega_1,\le)\rightarrow (\omega_1,\le)$, and for every uncountable set $F \subseteq \omega_1$, there exist $\alpha \in F$ and $k \in \mathbb{Z}$, such that $h(\alpha)=\phi^k(\alpha)$.
	\end{lem}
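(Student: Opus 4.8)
The plan is to prove the contrapositive by a forcing argument, in the spirit of the rigidity proofs of the previous section. Working in the ground model, suppose towards a contradiction that some condition $p_0\in\mathbb{P}$ forces that $\overline{h}$ is an automorphism of $(\omega_1,\le)$, that $\dot{F}$ is uncountable, and that $\overline{h}(\alpha)\neq\phi^k(\alpha)$ for every $\alpha\in\dot{F}$ and every $k\in\mathbb{Z}$. Since $\mathbb{P}$ has the Knaster property, $\omega_1$ is preserved and $\dot{F}$ cannot be forced into a fixed ground-model countable set; hence there is an uncountable $I\subseteq\omega_1$ together with conditions $p_\alpha\le p_0$ with $p_\alpha\Vdash\alpha\in\dot{F}$. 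Extending each $p_\alpha$ I may assume that it decides $\overline{h}(\alpha)=\beta_\alpha$ for some ordinal $\beta_\alpha$, and that both $\alpha$ and $\beta_\alpha$ lie in the finite set $p_\alpha$.

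Next I would clean up $\{p_\alpha\}_{\alpha\in I}$ using the $\Delta$-lemma exactly as in the proof of the Knaster property of $\mathbb{P}$: after thinning, the $p_\alpha$ form a $\Delta$-system with a fixed countable root $R$, the orders and the partial automorphisms $\phi_\alpha$ agree on $R$, and the extensions of $R$ with the two marked points $\alpha,\beta_\alpha$ all realize the same isomorphism type over $R$. I may also assume $\alpha\notin R$ (discarding the countably many exceptions) and $\beta_\alpha\notin R$: if uncountably many $\beta_\alpha$ fell into the countable set $R$, pigeonhole would give $\alpha\neq\alpha'$ with $\beta_\alpha=\beta_{\alpha'}$, and amalgamating $p_\alpha,p_{\alpha'}$ over $R$ would force $\overline{h}(\alpha)=\overline{h}(\alpha')$, contradicting injectivity. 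For $\alpha\neq\beta$ in $I$ let $\theta\colon p_\alpha\to p_\beta$ be the canonical $\Delta$-system isomorphism; it fixes $R$, commutes with the $\phi$'s, and sends $\alpha\mapsto\beta$ and $\beta_\alpha\mapsto\beta_\beta$.

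The crucial observation is that the failure hypothesis translates precisely into the hypothesis of the amalgamation Lemma 3. Indeed, if $\alpha$ and $\beta_\alpha$ lay in the same orbit of the finite partial automorphism $\phi_\alpha$, say $\beta_\alpha=\phi_\alpha^k(\alpha)$, then $p_\alpha$ would force $\overline{h}(\alpha)=\beta_\alpha=\phi^k(\alpha)$, contradicting $p_0$; hence $\alpha$ and $\beta_\alpha$ lie in \emph{different} $\phi_\alpha$-orbits (and $\alpha\neq\beta_\alpha$, via the case $k=0$). I may therefore apply Lemma 3 to the isomorphic extensions $(p_\alpha,\phi_\alpha)$ and $(p_\beta,\phi_\beta)$ of $R$ with isomorphism $\theta$, taking $a=\alpha$ and $b=\beta_\alpha$. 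This produces a linear order $\le_c$ on $p_\alpha\cup p_\beta$ extending both, with $\phi_\alpha\cup\phi_\beta$ still a partial automorphism, and satisfying $\alpha<_c\theta(\alpha)=\beta$ while $\theta(\beta_\alpha)=\beta_\beta<_c\beta_\alpha$. Setting $q=(p_\alpha\cup p_\beta,\le_c,\phi_\alpha\cup\phi_\beta)$ yields a condition of $\mathbb{P}$ — the ordinal constraints are inherited, as no new $\phi$-pairs are introduced — with $q\le p_\alpha,p_\beta$. Then $q\Vdash\alpha<\beta$ yet $q\Vdash\overline{h}(\alpha)=\beta_\alpha>\beta_\beta=\overline{h}(\beta)$, so $q$ forces $\overline{h}$ to reverse an order relation, contradicting $q\le p_0$.

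I expect the main obstacle to be precisely the orbit bookkeeping of the third paragraph: one must recognise that ``$\overline{h}(\alpha)$ is not a power of $\phi$ applied to $\alpha$'' is exactly the separation of $\alpha$ and $\beta_\alpha$ into distinct $\phi_\alpha$-orbits that licenses Lemma 3, for it is this independence of orbits that allows the two comparisons $\alpha$ versus $\beta$ and $\beta_\alpha$ versus $\beta_\beta$ to be steered in opposite directions. The surrounding $\Delta$-system cleanup and the verification that $q$ is a legitimate condition of $\mathbb{P}$ are routine.
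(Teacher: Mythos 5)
Your proof is correct and takes essentially the same route as the paper's: fix conditions $p_\alpha$ deciding $\overline{h}(\alpha)=\beta_\alpha$, thin to a uniform $\Delta$-system (whose root, note, is finite rather than countable, since conditions are finite), observe that the failure hypothesis puts $\alpha$ and $\beta_\alpha$ in different $\phi_\alpha$-orbits, and apply Lemma 3 with $a=\alpha$, $b=\beta_\alpha$ to produce a condition forcing $\alpha<\beta$ while $\overline{h}(\beta)<\overline{h}(\alpha)$. The only differences are cosmetic: the paper argues directly that $x_\alpha$ and $\overline{x_\alpha}$ must lie in the same $\phi_\alpha$-orbit instead of framing everything contrapositively, and your explicit disposal of the case $\beta_\alpha\in R$ is a detail the paper leaves implicit.
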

	
	\begin{proof}
		Fix a sequence of names for elements of $F$, $\{\dot{x}_\alpha|\; \alpha <\omega_1 \}$. Let $$p \Vdash \dot{h}:(\omega_1,\dot{\le}) \rightarrow (\omega_1,\dot{\le}) \text{ is an isomorphism}.$$ For every $\alpha <\omega_1$ we fix a condition $p_\alpha=(p_\alpha,\le_\alpha,\phi_\alpha) \le p$, so that
		$p_\alpha \Vdash \dot{x}_\alpha=x_\alpha, \;  \dot{h}(x_\alpha)= \overline{x}_\alpha$, for some ordinals $x_\alpha, \overline{x}_\alpha \in \omega_1$. We can also assume that for each $\alpha$, $x_\alpha \neq \overline{x}_\alpha$, for otherwise we just take $k=0$. \\
		We choose an uncountable $\Delta$-system $\{ p_\alpha|\; \alpha \in S \}$, and make it as uniform as possible:
		
		\begin{itemize}
			\item $\forall{\alpha \in S}\; \forall{\beta \in S}\; \alpha \neq \beta \implies (p_\alpha,\le_\alpha)\cap (p_\beta,\le_\beta)=(R,\le_R)$, for some fixed ordering $\le_R$ of $R$,
			\item $\phi_\alpha[R]\subseteq R$,
			\item $\phi^{-1}_\alpha[R]\subseteq R$,
			\item extensions $(R,\le_R) \subseteq (R\cup \{x_\alpha \},\le_\alpha)$ are pairwise isomorphic,
			\item extensions $(R,\le_R) \subseteq (R\cup \{ \overline{x}_\alpha \},\le_\alpha)$ are pairwise isomorphic,
			\item extensions $(R,\le_R) \subseteq (p_\alpha,\le_\alpha)$ are pairwise isomorphic,
			\item The way $\phi_\alpha$ acts on $p_\alpha$ is independent from the choice of $\alpha \in S$. More precisely, $\forall{\alpha \in S} \; \forall{\beta \in S}$ the following diagram commutes
			
			\begin{center}
				$$
				\begin{tikzcd}
					p_\alpha \arrow{r}{\phi_\alpha} \arrow{d}{h} & p_\alpha \arrow{d}{h}\\
					p_\beta \arrow{r}{\phi_\beta} & p_\beta
				\end{tikzcd}
				$$
			\end{center}
			where $h$ is the unique isomorphism between $(p_\alpha,\le_\alpha)$ and $(p_\beta.\le_\beta)$. 
		\end{itemize}
		In particular, the unique isomorphism $h$ maps $x_\alpha$ to $x_\beta$, and $\overline{x}_\alpha$ to $\overline{x}_\beta$. Fix $\alpha \in S$. We claim that $x_\alpha$ and $\overline{x}_\alpha$ are in the same orbit of $\phi_\alpha$. For otherwise, we fix $\beta \in S \setminus \{\alpha \}$, and apply Lemma 3 for $a=x_\alpha$ and $b=\overline{x}_\alpha$. This way we obtain a condition $$q=(p_\alpha \cup p_\beta,\le_q,\phi_\alpha \cup \phi_\beta) \le p_\alpha, p_\beta,$$ satisfying $x_\alpha <_q x_\beta$, and $\overline{x}_\beta<_q \overline{x}_\alpha$. But then $$q \Vdash \dot{x}_\alpha \dot{<} \dot{x}_ \beta, \; \dot{h}(\dot{x}_\beta) \dot{<} \dot{h}(\dot{x}_\alpha),$$ contrary to the choice of $p$. In conclusion 
		$p_\alpha \Vdash \exists{k\in \mathbb{Z}} \; \dot{h}(\dot{x}_\alpha)= \dot{\phi}_\alpha^k(\dot{x}_\alpha)$.
	\end{proof}
	
	Now we are in position to prove Theorem \ref{jerusalem}. 
	
	\begin{proof}
		
		Since $(\omega_1,\le)$ is separable, we can replace it by an isomorphic copy $A \subseteq \mathbb{R}$, $A$ being $\omega_1$-dense. Then $\phi:A \rightarrow A$ is an increasing bijection, strictly above the diagonal (i.e. $\forall x \; x<\phi(x)$). Let $h:A \rightarrow A$ be any increasing bijection. Both $h$ and $\phi$ extend uniquely to the whole real line, so we can assume that $\phi, h :\mathbb{R} \rightarrow \mathbb{R}$ are continuous, increasing bijections. 
		\par For $k \in \mathbb{Z}$, put $F_k=\{ x\in \mathbb{R}|\; h(x)=\phi^k(x) \}$. By continuity, sets $F_k$ are closed, and by Lemma 5, $\bigcup_{i \in \mathbb{Z}} F_i$ is dense. Fix some $k \in \mathbb{Z}$ for which the set $F_k$ is nonempty. We aim to prove that $F_k=\mathbb{R}$. If not, there exists $x \in F_k$, and $\delta>0$ satisfying at least one of conditions
		$$(x,x+\delta) \cap F_k = \emptyset,$$ and
		$$(x-\delta, x) \cap F_k = \emptyset.$$
		
		Let us assume the first case, the other being similar. Since the union of the sets $F_i$ is dense, we can find a decreasing sequence $\{x_n \}_{n<\omega}$, converging to $x$, and integers $k_n$, for which $h(x_n)=\phi^{k_n}(x_n)$. \\
		
		Suppose that for infinitely many $n$, the inequality $k_n>k$ holds. By replacing $\{x_n\}_{n<\omega}$ with a subsequence, we may assume that this is the case for all $n<\omega$. Then 
		$$\phi^{k_n}(x_n)\ge \phi^{k+1}(x_n)\underset{n \rightarrow \infty}{\longrightarrow} \phi^{k+1}(x)>\phi^k(x)=h(x),$$
		which contradicts $\underset{n \rightarrow \infty }{\lim} \phi^{k_n}(x_n)=h(x)$. \\
		If for infinitely many $n$ the inequality $k_n<k$ holds, we proceed in analogous manner. The only way out is that $k_n=k$ for all but finitely many $n$, but this in turn contradicts $(x,x+\delta)\cap F_k = \emptyset$. Therefore $F_k = \mathbb{R}$, and the theorem is proved.
	\end{proof}

	\section{Problems}
	
	\begin{prob}
		Are there some natural conditions for a class $\mathcal{F}$, ensuring that $\forcing$ is, up to completion, the same as the Cohen forcing? 
	\end{prob}

	\begin{prob}
		The automorphism group of an $\omega_1$-dense real order type can be very big or trivial. Are some intermediate options possible? By Theorem 5 it can be isomorphic to $(\mathbb{Z},+)$. Can it be isomorphic, for example, to $(\mathbb{Q},+)$?
	\end{prob}

\end{document}